\title[Equidistribution of word values]{On the asymptotic equidistribution of word values in symmetric groups}
\keywords{Word maps, symmetric groups, equidistribution}
\subjclass{20F69, 20B30}
\author{Vadim Alekseev}
\address{Vadim Alekseev, TU Dresden, 01062 Dresden, Germany}
\email{vadim.alekseev@tu-dresden.de}
\author{Jakob Schneider}
\address{Jakob Schneider, TU Dresden, 01062 Dresden, Germany}
\email{jakob.schneider@tu-dresden.de}
\author{Andreas Thom}
\address{Andreas Thom, TU Dresden, 01062 Dresden, Germany}
\email{andreas.thom@tu-dresden.de}
\theoremstyle{plain}
\newtheorem{theorem}{Theorem}[section]
\newtheorem{definition}[theorem]{Definition}
\newtheorem{proposition}[theorem]{Proposition}
\newtheorem{lemma}[theorem]{Lemma}
\newtheorem{corollary}[theorem]{Corollary}
\newtheorem{conjecture}{Conjecture}
\theoremstyle{definition}
\newtheorem{remark}[theorem]{Remark}
\newtheorem{example}[theorem]{Example}
\newcommand{\beq}{\begin{equation}}
\newcommand{\eeq}{\end{equation}}
\newcommand{\beqn}{\begin{equation*}}
\newcommand{\eeqn}{\end{equation*}}
\newcommand{\brq}{\begin{dmath}[compact]}
\newcommand{\erq}{\end{dmath}}
\newcommand{\brqn}{\begin{dmath*}[compact]}
\newcommand{\erqn}{\end{dmath*}}
\newcommand{\bag}{\begin{align}}
\newcommand{\eag}{\end{align}}
\newcommand{\bagn}{\begin{align*}}
\newcommand{\eagn}{\end{align*}}
\newcommand{\norm}[1]{\left\lvert#1\right\rvert}
\newcommand{\card}[1]{\left\lvert#1\right\rvert}
\newcommandx{\set}[2][2=\empty]{\{#1\ifx#2\empty\else\,|\,#2\fi\}}
\DeclareMathOperator{\fix}{fix}
\newcommand{\vertiii}[1]{{|\kern-0.2ex|\kern-0.2ex| #1 
    |\kern-0.2ex|\kern-0.2ex|}}
\begin{document}
\begin{abstract}
We prove an asymptotic equidistribution result for word values for words with constants in the symmetric group. We also speculate about simultaneous asymptotic equidistribution results for values of $d$-tuples of elements of $\mathbb F_d$.
\end{abstract}

\maketitle

\tableofcontents

\section{Introduction}

Denote the symmetric group on $n$ letters by $S_n$ and the free group on $r$ generators by $\mathbb F_r$. Every element $w \in \mathbb F_r$ naturally induces a word map $w \colon S_n^r \to S_n$ by evaluation. The study of the image of such word maps has many aspects and a long history, see \cite{MR2041227, MR3831276,MR3964506} and the references therein. A long-standing conjecture is that the image $w$ contains the alternating group, provided $w$ is not a power and $n$ is large enough. A much simpler metric version of this conjecture was shown to hold by the second and third author in \cite{MR4292938} for all non-trivial words. The natural metric in this context is the normalized Hamming metric
$$d(\sigma,\tau) = \frac1n |\{i \in \{1,\dots n\} \mid \sigma(i) \neq \tau(i)\}|, \quad \sigma,\tau \in S_n.$$ The main result of \cite{MR4292938} was the following theorem:
\begin{theorem}[\cite{MR4292938}] \label{thm:dense}
Let $w \in \mathbb F_r$ be non-trivial. There exist constants $c, \varepsilon>0$, such that for all $\sigma \in S_n$ we have
$d(\sigma,w(S_n^r))< cn^{-\varepsilon}.$
\end{theorem}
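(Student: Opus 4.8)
The plan is to reduce the statement to a purely combinatorial claim about cycle types and then to feed enough cycle types into $w(S_n^r)$ using two simple operations — block decomposition and an induced-action (wreath product) construction built from one non-trivial value of $w$ — organised as an induction on $w$. For the first step, note that $w(S_n^r)$ is conjugation-invariant, hence a union of conjugacy classes $C_\lambda$ ($\lambda \vdash n$), so that for $\sigma$ of cycle type $\mu$ one has $d(\sigma, w(S_n^r)) = \min\{d(C_\mu,C_\lambda) : C_\lambda \subseteq w(S_n^r)\}$. An elementary argument bounds $d(C_\mu,C_\lambda)$ by $\tfrac{2}{n}$ times the number of cycle cuts and merges needed to transform $\mu$ into $\lambda$; for instance, if $\lambda$ refines $\mu$ then $d(C_\mu,C_\lambda)\le \tfrac2n(\#\lambda - \#\mu)$, where $\#\lambda$ denotes the number of parts, with a symmetric bound for coarsenings. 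Thus it suffices to produce, for every $\mu\vdash n$, some $\lambda\vdash n$ with $C_\lambda\subseteq w(S_n^r)$ reachable from $\mu$ by $O(n^{1-\varepsilon})$ cuts/merges; in particular we are always free to adjoin $O(n^{1-\varepsilon})$ extra fixed points.

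Two mechanisms produce cycle types in the image. First, block decomposition: if $n=\sum_j n_j$ then $\prod_j w(S_{n_j}^r)\subseteq w(S_n^r)$, because word maps respect the inclusion $\prod_j S_{n_j}\le S_n$; in particular any type realised on a sub-block may be padded by fixed points, which already disposes of the $1$-parts of $\mu$. Second, since $w\neq 1$, fix $k$ and $a_1,\dots,a_r\in S_k$ with $c:=w(a_1,\dots,a_r)\neq 1$. For $m\ge 1$, identify $\{1,\dots,km\}$ with $\{1,\dots,k\}\times\{1,\dots,m\}$ and use fibered tuples $\tau_i(j,x)=(a_i(j),f_{i,j}(x))\in S_m\wr S_k$. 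Then $w(\tau_1,\dots,\tau_r)$ lies in $S_m\wr S_k$, projects to $c$ in the base, and over each $c$-orbit $O\subseteq\{1,\dots,k\}$ acts as the $|O|$-fold twist of a ``holonomy'' element $g_O\in S_m$; its cycle type over $O$ is therefore $|O|\cdot(\text{cycle type of }g_O)$, where $g_O$ is the value, on the tuple $(f_{i,j})_{j\in O}$, of a word assembled from $w$ and the combinatorics of the $\mathbb F_r$-action on $O$.

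Now one shows that, as the $f_{i,j}$ vary, the holonomies $g_O$ fill a Hamming-dense subset of $S_m$ at a polynomial rate. Since $k$ — hence the number of orbits $O$ and the rescalings $|O|$ — depends only on $w$, the passage from $g_O$ to $|O|\cdot(\text{type of }g_O)$ distorts cycle types by only a bounded factor in the cut/merge metric of the first step, so Hamming-denseness transfers from $S_m$ to $S_{km}$ (values of $n$ not divisible by $k$ cost $O(1)$ points). Because realising the $g_O$ is an instance of the same theorem for a related word, this sets up an induction, whose base cases are the power words $x^d$ and the words lying in the derived subgroup $[\mathbb F_r,\mathbb F_r]$.

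The crux is precisely the base of this induction together with the bookkeeping of $\varepsilon$. For words with non-trivial image in the abelianisation one reduces, via Nielsen transformations and substitutions, to $w=x^d$, whose image is exactly the set of $d$-th powers, and one then verifies directly — from the elementary cycle-type description of $d$-th powers, merging and splitting cycles to repair the divisibility constraints — that this set is $O(d/n)$-dense. Words in the derived subgroup are the genuinely delicate case and seem to require an Ore-type input guaranteeing that $w(S_n^r)$ already meets enough conjugacy classes. Finally, since each reduction step may shrink $\varepsilon$, one must bound the depth of the recursion purely in terms of $w$ (say, in terms of its length), and it is this bound that produces constants $c,\varepsilon$ depending on $w$ alone. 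A more analytic alternative would be to invoke Larsen--Shalev-type near-equidistribution of $w$-random elements for non-power $w$ to reduce everything to power words, but uniformity in $n$ at a genuine polynomial rate remains the sticking point.
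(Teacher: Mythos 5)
First, note that Theorem \ref{thm:dense} is not proved in this paper at all: it is quoted from \cite{MR4292938}, so your proposal can only be measured against the strategy of that reference. Your general frame --- reduce to cycle types via conjugation-invariance of the image and a cut/merge bound on distances between conjugacy classes, then feed cycle types into the image via block decompositions $\prod_j w(S_{n_j}^r)\subseteq w(S_n^r)$ and an induced action over a non-trivial value $c=w(a_1,\dots,a_r)\in S_k$ --- is the right frame and is in the spirit of the cited proof. But the argument as written has genuine gaps, and they sit exactly at the heart of the matter. The recursion is not well-founded: the holonomy $g_O$ is the value of a reduced word $v_O$ of length exactly $|w|$ in the larger free group on the fibre generators $f_{i,j}$ (reducedness of $w$ forbids any cancellation between consecutive letters), so ``an instance of the same theorem for a related word'' decreases no complexity measure, and ``bounding the depth of the recursion'' has no basis. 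The missing idea is to choose $(a_1,\dots,a_r)$ so that the relevant trajectory visits pairwise distinct points, forcing some generator $f_{i,j}$ to occur exactly once in $v_O$; then $v_O$ induces a \emph{surjective} word map on the fibre and the recursion terminates in one step. Without this, your ``base case'' of words in the derived subgroup is precisely the case the induction cannot reach, and the appeal to an ``Ore-type input'' is not available for general $w\in[\mathbb F_r,\mathbb F_r]$ --- at that point the proposal stops being a proof.

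Second, the density transfer from $S_m$ to $S_{km}$ fails as stated. Over an orbit $O$ the realized cycle types are $|O|\cdot(\text{type of }g_O)$, i.e.\ partitions all of whose parts are divisible by $|O|$; a target such as a product of $2$-cycles (or the identity) on the $|O|m$ points above $O$ is at normalized Hamming distance $\Theta(1)$ from every such rescaled type once $|O|\geq 3$. So ``bounded distortion in the cut/merge metric'' does not transfer density: one must produce, for every $j\geq 2$, permutations that are approximately products of $j$-cycles on blocks of prescribed size, and the divisibility obstruction forces a genuinely different treatment of short cycle lengths (this, combined with grouping equal cycle lengths of the target into common blocks, is where the error $O(n^{1-\varepsilon})$ actually has to be earned). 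Moreover the holonomies over different orbits share fibre generators and cannot be prescribed independently, so controlling the cycle type on all of $\{1,\dots,km\}$ simultaneously --- not just over one distinguished orbit --- is a further unaddressed point. These are not bookkeeping issues to be deferred; they are the content of the theorem.
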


It is natural to study not just the image but the \emph{distribution of word values}, i.e., the push-forward of the normalized counting measure on $S_n^r$ to $S_n$. It is easy to see that for primitive $w$, the push-forward is the equidistribution on $S_n$ and the converse was proven in seminal work by Puder and Parzanchevski, \cite{MR3264763, MR4138706}. In general, it is conjectured that for particular words, the distribution approximates the equidistribution in the $L^1$-metric, see \cite{MR3997129, MR3034482, MR3449221}. As a consequence of our main result, we show that the distribution of word values for an arbitrary non-trivial word approximates the equidistribution, as $n$ increases, with respect to the associated Kantorovich-Rubinstein metric. The main difference between the Kantorovich-Rubinstein metric and the $L^1$-metric is that small changes in the Hamming metric are allowed to compare the measures, again allowing for more general positive results.

In fact, our results are more general and also apply to words with constants, provided that the so-called critical constants are fixed point free, see Theorem \ref{thm:main}. However, before stating our main result, we need some definitions: 

By a \emph{word with constants} in the group $G$ and in $r$ variables $x_1,\ldots,x_r$, we mean an element
$$
c_0x_{\iota(1)}^{\varepsilon(1)}c_1\cdots c_{l-1}x_{\iota(l)}^{\varepsilon(l)}c_l\in G\ast\mathbb{F}_r,
$$
of the free product, where $\mathbb{F}_r=\langle x_1,\ldots,x_r\rangle$ denotes the free group of rank $r$; $\varepsilon(j)\in\{\pm1\}$ ($j\in\{1,\ldots,l\}$), and $c_0,\ldots,c_l\in G$. We call $w$ \emph{reduced} if $\iota(j)=\iota(j+1)$ and $\varepsilon(j)=-\varepsilon(j+1)$ implies that $c_j \notin{Z}(G)$ for $j\in\{1,\ldots,l-1\}$. In this article, all words with constants are reduced. 
The \emph{length} of the (reduced) word $w$ is defined to be  $\lvert w\rvert\coloneqq l$. We call the constants $c_1,\ldots,c_{l-1}$ \emph{intermediate constants}. Moreover, we define the sets of indices
\begin{gather*}
J_0(w)\coloneqq\{j\in\{1,\ldots,l-1\}\mid\iota(j)\neq\iota(j+1)\},\\
J_+(w)\coloneqq\{j\in\{1,\ldots,l-1\}\mid\iota(j)=\iota(j+1)\text{ and }\varepsilon(j)=\varepsilon(j+1)\},\\
J_-(w)\coloneqq\{j\in\{1,\ldots,l-1\}\mid\iota(j)=\iota(j+1)\text{ and }\varepsilon(j)=-\varepsilon(j+1)\}.
\end{gather*}
The elements of $J_-(w)$ are called \emph{critical indices} and the elements $c_j$ with $j\in J_-(w)$ are called \emph{critical constants}. We call $w$ \emph{strong} if there are no critical indices. In other words, $w$ is strong if it does not contain a subword of the form $x^{-1} c x$ or of the form $x c x^{-1}$ for a variable $x$ and a constant $c$.
From $w$ we obtain the corresponding word map $w\colon G^r\to G$ by substitution, which is also denoted by $w$: Write $w(g_1,\ldots,g_r)\in G$ for the image of $w$ under the unique homomorphism $G\ast\mathbb{F}_r\to G$ which fixes $G$ elementwise and maps $x_i\mapsto g_i$ ($i\in\{1,\ldots,r\})$.
There is a natural homomorphism $\epsilon \colon G \ast \mathbb F_r \to \mathbb F_r$ that sends all elements of $G$ to the neutral element. We say that $w \in G \ast \mathbb F_r$ is \emph{non-singular} if $\epsilon(w) \neq 1.$

If $G$ is finite, then the normalized counting measure on $G^r$
can be pushed forward to $G$ via the map $w$, the resulting probability measure on $G$ is denoted by $\mu^{G}_{w} \in {\rm P}(G)$ and called the \emph{distribution of word values} of $w.$

Note that the Hamming metric is a bi-invariant metric and our normalization has the effect that it takes values in the interval $[0,1]$ for all $n.$
Using the normalized Hamming metric, we can also define a natural distance function on ${\rm P}(G)$, called the Kantorovich-Rubinstein metric:
$$d(\mu,\nu) = \min\left\{\int d(\sigma,\tau) \ d\gamma(\sigma,\tau) \mid \gamma \in {\rm P}(G \times G) \colon \pi_{1,*}(\gamma)=\mu, \pi_{2,*}(\gamma)=\nu  \right\},$$
for all $\mu,\nu \in {\rm P}(G).$ The  Kantorovich–Rubinstein metric measures the optimal way of transporting the mass of $\mu$ to the mass of $\nu$ with weights according to the normalized Hamming metric.

Given an arbitrary probability measure $\mu \in {\rm P}(S_n)$, we can make it conjugation invariant by defining $\Sigma(\mu)(A):= \frac{1}{n!} \sum_{\sigma \in S_n} \mu(A^{\sigma}).$ The equidistribution measure on $S_n$ is denoted by $\mu_{\rm unif}$.

We say that a word with constants $w \in S_n \ast \mathbb F_r$ is called \emph{regular} if $w$ is not conjugate to an element of $S_n$ and each critical constant of $w^2$ is fixed point free. If $w=c_0x_{\iota(1)}^{\varepsilon(1)}c_1\cdots c_{l-1}x_{\iota(l)}^{\varepsilon(l)}c_l$ with either $\iota(1) \neq \iota(l)$ or $\varepsilon(1) \neq - \varepsilon(l)$, this condition just says that critical constants of $w$ must be fixed point free.

\begin{theorem} \label{thm:main} Let $\varepsilon>0$. There exists a constant $c_\varepsilon>0$ such that the following holds: For any  $n \geq 1$ and $w \in S_n \ast \mathbb F_r$  a regular word with constants of length $l \geq 1$,  we have
$$d(\Sigma(\mu^{S_n}_{w}),\mu^{S_n}_{\rm unif}) \leq c_\varepsilon l^2 n^{-1/3 + \varepsilon}.$$
\end{theorem}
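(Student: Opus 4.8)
The plan is to reduce the statement, via conjugation invariance, to a comparison of the \emph{cycle‑type distributions} of $w(g_1,\dots,g_r)$ and of a uniformly random permutation $\pi$, and then to control the former by a first–moment estimate on $\fix(w(g)^e)$ that uses regularity in an essential way. First note that $\Sigma(\mu^{S_n}_w)$ and $\mu^{S_n}_{\rm unif}$ are conjugation invariant, hence convex combinations $\sum_C p_w(C)\,U_C$ and $\sum_C p_{\rm unif}(C)\,U_C$ of the uniform measures $U_C$ on conjugacy classes $C$, with weights the laws of the cycle type of $w(g)$, resp.\ of $\pi$. Using bi‑invariance of the Hamming metric one checks $d(U_C,U_{C'})=\min\{d(\sigma,\tau)\mid\sigma\in C,\tau\in C'\}=:\bar d(C,C')$ — the lower bound is trivial, and for the upper bound one transports an optimal pair $\sigma_0\in C,\tau_0\in C'$ by a uniform random conjugation. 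A routine lifting of optimal couplings of $p_w$ and $p_{\rm unif}$ then shows that $d(\Sigma(\mu^{S_n}_w),\mu^{S_n}_{\rm unif})$ equals the Kantorovich--Rubinstein distance $W_1(p_w,p_{\rm unif};\bar d)$ on the (finite) set of conjugacy classes.

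Next I would single out a \emph{good} set of conjugacy classes: call $\sigma$ good if at most $n^{2/3}$ of the points lie in cycles of $\sigma$ of length $\le n^{1/3}$. For good $\sigma$ the cycles of length $>n^{1/3}$ cover all but $\le n^{2/3}$ points and there are at most $n^{2/3}$ of them; merging all of them into a single cycle costs $\le n^{2/3}$ transpositions, hence moves $\sigma$ by $\le 2n^{-1/3}$ in $d$. The two resulting permutations (for good $\sigma,\tau$) each consist of one long cycle plus a ``short part'' on $\le n^{2/3}$ points, and the two long cycles can be overlaid so as to disagree on only $O(n^{2/3})$ points; thus $\bar d(C_\sigma,C_\tau)=O(n^{-1/3})$ for good $\sigma,\tau$. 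Since $\bar d\le1$ and every class is at $\bar d$‑distance $\le1$ from a good one, the standard ``diameter plus bad mass'' bound for $W_1$ gives $d(\Sigma(\mu^{S_n}_w),\mu^{S_n}_{\rm unif})\le O(n^{-1/3})+\Prob(w(g)\ \text{not good})+\Prob(\pi\ \text{not good})$. For $\pi$, the expected number of points in cycles of length $\le n^{1/3}$ equals $\sum_{e\le n^{1/3}} e\cdot e^{-1}=n^{1/3}$, so Markov's inequality gives $\Prob(\pi\ \text{not good})\le n^{-1/3}$.

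The remaining and principal task is $\Prob(w(g)\ \text{not good})\le c_\varepsilon l^2 n^{-1/3+\varepsilon}$. Since $\{i\mid i\ \text{lies in a cycle of length}\ e\}\subseteq\fix(w(g)^e)$, it suffices by Markov to bound $\mathbb E[\#\{i\mid |\mathrm{cyc}(i)|\le n^{1/3}\}]\le\sum_{e\le n^{1/3}}\mathbb E[\fix(w(g)^e)]$, and for this I would prove the uniform estimate $\mathbb E[\fix(w(g)^e)]\le C\,l^2\,\tau(e)$ for all $e\le n^{1/3}$, where $\tau$ is the divisor function. The idea is to follow the orbit of a point $i$ through the reduced word with constants $w^e$, of length $el$: as long as the $g_j$‑values encountered are fresh, the orbit is a uniform exploration and returns to $i$ with probability $n^{-1}$ times a bounded ``arithmetic multiplicity'' $O(l\,\tau(e))$ accounting for genuine sub‑periods $d\mid e$ — it is here that fixed point freeness of the critical constants of $w^2$ is used, to rule out spurious short returns produced by cancellations $x^{\pm1}cx^{\mp1}\mapsto\mathrm{id}$, while the hypothesis that $w$ is not conjugate into $S_n$ excludes the degenerate case — and the orbits that meet a collision within $el$ micro‑steps but still return contribute $O((el)^2/n^2)$ per point, i.e.\ $O(l^2n^{-1/3})$ after summing over $i$. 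Using $\sum_{e\le X}\tau(e)=O(X\log X)$ this yields $\mathbb E[\#\{i\mid |\mathrm{cyc}(i)|\le n^{1/3}\}]=O(l^2n^{1/3}\log n)$, hence $\Prob(w(g)\ \text{not good})=O(l^2n^{-1/3}\log n)$. Combining the three bounds, absorbing the logarithm into $n^{\varepsilon}$ and enlarging the constant to cover small $n$ completes the argument.

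I expect the main obstacle to be exactly the uniform‑in‑$e$ estimate $\mathbb E[\fix(w(g)^e)]\le C\,l^2\,\tau(e)$: running the orbit exploration when the intermediate constants of $w$ are arbitrary permutations, and in particular showing that raising a regular word to a power cannot create many fixed points, is where the fixed‑point‑freeness of the critical constants of $w^2$ (and non‑conjugacy of $w$ into $S_n$) is indispensable; it is also the source of the factor $l^2$, and — through the threshold $n^{1/3}$ that balances this exploration error against the $O(n^{-1/3})$ cost of merging long cycles — of the exponent $-1/3$.
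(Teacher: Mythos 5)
Your overall architecture — reduce to cycle-type statistics by conjugation invariance, declare a permutation ``good'' if few points lie in cycles of length $\le n^{1/3}$, merge the long cycles at cost $O(n^{-1/3})$, and pay the bad-event probability — agrees with the paper, which uses exactly this geometry via Lemma~\ref{lem:basic} and Lemma~\ref{lem:ncycle} and the intermediate class of $n$-cycles. The divergence is in the probabilistic core, and that is where your argument has a genuine gap. The paper does \emph{not} control $\Prob(w(g)\ \text{not good})$ by a first moment: it proves a $d$-point correlation bound (Lemma~\ref{lem:eberhard} and its corollary, $\Prob(X\subseteq F_{\le h})\le(d(lh)^2/(n-dlh))^d$ for $|X|=d$), and feeds it into the Eberhard--Jezernik argument to get the exponential tail $\Prob(f_{\le h}\ge f)\le\exp(-cf/(lh)^2)$ of Proposition~\ref{prop:ebjez}; with $h=cn^{1/3-\varepsilon}$ and $f=l^2n^{2/3}$ the bad event then has superpolynomially small probability. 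You instead rest everything on the single unproved estimate $\mathbb{E}[\fix(w(g)^e)]\le Cl^2\tau(e)$ uniformly for $e\le n^{1/3}$ and over all regular words with constants. This is not a routine step you may defer: the per-point bound that the paper actually establishes (the $d=1$ case of Lemma~\ref{lem:eberhard}) only yields $\mathbb{E}[f_{\le h}(w(g))]=O((lh)^2)$, i.e.\ \emph{quadratic} in the cutoff $h$, and a Markov/first-moment argument with a quadratic bound provably cannot reach the exponent $-1/3$: optimizing $(lh)^2/f+f/n+1/h$ over $f$ and $h$ gives only $O(l^{1/2}n^{-1/4})$. So your route genuinely requires the linear-in-$h$ (up to logs) first moment, which is strictly sharper than anything proved in the paper.

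Moreover, the sketch you give for that estimate does not amount to a proof. The assertion that a trajectory of length $el$ through a word with arbitrary permutation constants returns to its start with probability $n^{-1}$ times an ``arithmetic multiplicity $O(l\,\tau(e))$'', with collision corrections of size $O((el)^2/n^2)$ per point, is precisely the hard combinatorial content: one must enumerate all coincidence patterns among the $el$ visited points that are compatible with a return, and show that regularity (fixed-point-freeness of the critical constants of $w^2$) kills all but $O(l\,\tau(e))$ of them. A single self-collision of the trajectory neither forces nor forbids a return, and the paper's own careful forced/free-choice analysis of exactly this enumeration (Cases A, B.1--B.3, C in the proof of Lemma~\ref{lem:eberhard}) only achieves the count $(le)^2$, not $l^2\tau(e)$. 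Whether the sharper count holds uniformly over words with constants of length $l$ is plausible (it is true for $w=x$, where $\mathbb{E}[\fix(g^e)]=\tau(e)$) but is an open technical claim in your write-up, not a lemma you have reduced to known facts. In short: correct skeleton, but the load-bearing estimate is missing, and the weaker estimate that is actually available does not suffice for your Markov step.
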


This result raises two natural questions that we cannot answer in full generality. First of all, is it necessary to symmetrize the push-forward or is it automatically asymptotically conjugation-invariant with respect to the Kantorovich-Rubinstein metric?  We conjecture that the answer is affirmative and discuss particular cases at the end of the next section.

\begin{conjecture} \label{conj:conj}
Let $w_n \in S_n \ast \mathbb F_r$ for $n \in \mathbb N$ be regular words with constants of bounded length. Then, we have $$\lim_{n \to \infty } d(\Sigma(\mu_{w_n}^{S_n}),\mu_{w_n}^{S_n}) = 0.$$
\end{conjecture}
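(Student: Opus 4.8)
The natural first step is to strip the symmetrisation from the target. Since the Hamming metric is bi-invariant, $\mu_{\mathrm{unif}}^{S_n}$ is conjugation-invariant, so $\Sigma(\mu_{\mathrm{unif}}^{S_n})=\mu_{\mathrm{unif}}^{S_n}$; for words of bounded length $l$, Theorem~\ref{thm:main} gives $d(\Sigma(\mu_w^{S_n}),\mu_{\mathrm{unif}}^{S_n})=O(n^{-1/3+\varepsilon})\to0$. By the triangle inequality the conjecture is therefore \emph{equivalent} to $\lim_{n\to\infty}d(\mu_w^{S_n},\mu_{\mathrm{unif}}^{S_n})=0$, i.e.\ to the assertion that the unsymmetrised word measure itself equidistributes. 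I would take this as the goal. Two easy cases show what is at stake: if $w$ has no constants, then $w(\tau g_1\tau^{-1},\dots,\tau g_r\tau^{-1})=\tau\,w(g_1,\dots,g_r)\,\tau^{-1}$, so $\mu_w^{S_n}$ is \emph{exactly} conjugation-invariant and the conjecture is trivial; if some variable occurs exactly once, then conditioning on the remaining variables exhibits $w(g)$ as the image of a single uniform factor under a bijection, so $\mu_w^{S_n}=\mu_{\mathrm{unif}}^{S_n}$. The entire difficulty is thus created by the intermediate constants, which destroy the conjugation-equivariance of the word map.

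To locate which test functions detect the failure of conjugation-invariance, I would use Kantorovich--Rubinstein duality. Writing $W=w(g_1,\dots,g_r)$ with $g_i$ independent uniform and $\bar f(x)\coloneqq\frac1{n!}\sum_{\sigma\in S_n}f(\sigma x\sigma^{-1})$ for the conjugation-average of a $1$-Lipschitz $f$, one computes $\int f\,d\Sigma(\mu_w^{S_n})=\mathbb{E}[\bar f(W)]$, whence
$$d(\mu_w^{S_n},\Sigma(\mu_w^{S_n}))=\sup_{\mathrm{Lip}(f)\le1}\mathbb{E}\big[(f-\bar f)(W)\big].$$
The function $f-\bar f$ has vanishing average on every conjugacy class, so conjugation-invariant statistics — in particular the normalised cycle counts $\tfrac1n\card{\{i:W^k(i)=i\}}$, the prototypical Lipschitz class functions — contribute nothing. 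The conjecture thus reduces to the qualitative statement that, \emph{conditionally on its cycle type, $W$ is asymptotically uniform on the corresponding conjugacy class} (in the Kantorovich--Rubinstein metric, averaged over the cycle type): exact conditional uniformity would force $\mathbb{E}[(f-\bar f)(W)\mid\text{cycle type}]=0$ since $f-\bar f$ has zero class-average.

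For a concrete attack, note that conjugating the output of $w$ by $\tau$ and re-randomising the variables shows $\Sigma(\mu_w^{S_n})=\frac1{n!}\sum_{\tau\in S_n}\mu_{w^\tau}^{S_n}$, where $w^\tau$ denotes $w$ with each constant $c_j$ replaced by $\tau c_j\tau^{-1}$. Since $d(\mu_w^{S_n},\,\cdot\,)$ is convex, it follows that
$$d(\mu_w^{S_n},\Sigma(\mu_w^{S_n}))\le\frac1{n!}\sum_{\tau\in S_n}d(\mu_w^{S_n},\mu_{w^\tau}^{S_n}),$$
and for $\tau$ in the simultaneous centraliser $H\coloneqq\bigcap_j C_{S_n}(c_j)$ of the constants the summand vanishes, since then $w^\tau=w$. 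The plan would be to couple $\mu_w^{S_n}$ with $\mu_{w^\tau}^{S_n}$, for typical $\tau$, by a map moving few letters in the Hamming metric — for instance by resampling the input of a single occurrence of a variable so as to absorb the conjugation $c_j\mapsto\tau c_j\tau^{-1}$ at the adjacent position — reusing the quantitative spreading of the word map that drives the proof of Theorem~\ref{thm:main}.

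The main obstacle, and the reason this is only a conjecture, is precisely the regime permitted by the regularity hypothesis: when critical constants are fixed point free and may be long cycles, the centraliser $H$ has size only $O(n)$, so $H$-invariance is far from full $S_n$-invariance, and for a typical $\tau$ the constants of $w^\tau$ differ drastically from those of $w$. Coupling $\mu_w^{S_n}$ and $\mu_{w^\tau}^{S_n}$ cheaply then succeeds only when some variable occurs freely near the relevant constant, which fails exactly when every variable occurs several times with rigid intermediate constants; the repeated occurrences create dependencies that no single resampling can reorganise at low Hamming cost. Moreover, the symmetrised estimate of Theorem~\ref{thm:main} is an average that has already ``spent'' the conjugation and therefore gives no leverage on the non-class-function part of a test function. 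Bridging this gap — showing that $W$ equidistributes \emph{within} each conjugacy class, not merely across classes — appears to demand a genuinely finer, unsymmetrised analysis of the word map than the one underlying Theorem~\ref{thm:main}.
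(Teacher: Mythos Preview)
The statement you were asked to prove is a \emph{conjecture} in the paper, not a theorem: the authors explicitly say they ``cannot answer [this] in full generality'' and leave it open, noting in particular that even the single-variable cases $w(x)=x^2gx$ and $w(x)=x^2gx^{-1}$ are beyond their methods. There is therefore no ``paper's own proof'' to compare against.

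Your proposal is consistent with this status: it is not a proof, and you say so yourself (``the reason this is only a conjecture''). What you offer is a collection of correct reformulations and partial observations. The equivalence with $d(\mu_w^{S_n},\mu_{\rm unif}^{S_n})\to 0$ via Theorem~\ref{thm:main} and the triangle inequality is valid; the identity $\Sigma(\mu_w^{S_n})=\frac{1}{n!}\sum_\tau \mu_{w^\tau}^{S_n}$ is correct; and your Kantorovich--Rubinstein duality remark, reducing the problem to asymptotic equidistribution of $W$ \emph{within} its conjugacy class, is a clean way to isolate the difficulty. Your easy cases (no constants; a variable occurring once) match what the paper records at the end of Section~\ref{sec:conj}, though the paper also notes the substitution trick handling $w(x)=xgx$, which your framework does not single out. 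Conversely, your centraliser and coupling heuristics go somewhat beyond the paper's discussion, but they remain heuristics: the ``resample one occurrence to absorb the conjugation'' plan is not made precise, and as you correctly identify, it breaks down exactly in the cases the paper flags as open. In short, there is no gap to name because no proof is claimed on either side; your write-up is a reasonable research outline, not a solution.
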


Secondly, structurally similar results, see for example \cite{bodschneiderthom, MR3604379}, suggest that we do not need to assume anything on the critical constants in case the word is non-singular.

\begin{conjecture}
Let $w_n \in S_n \ast \mathbb F_r$ for $n \in \mathbb N$ be non-singular words with constants of bounded length. Then, we have
$$\lim_{n \to \infty} d(\Sigma(\mu^{S_n}_{w_n}),\mu^{S_n}_{\rm unif}) = 0.$$
\end{conjecture}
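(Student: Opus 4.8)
\emph{Reduction to counting cycles.}
Both $\Sigma(\mu^{S_n}_w)$ and $\mu_{\rm unif}$ are conjugation invariant, so the first move is to estimate the Kantorovich--Rubinstein distance of two arbitrary conjugation-invariant measures $\mu_1,\mu_2 \in {\rm P}(S_n)$ in terms of their expected number of cycles, which I will write $\#\mathrm{cyc}(\cdot)$. A conjugation-invariant measure is a mixture of uniform measures on conjugacy classes, so by convexity of $d$ in each argument and the triangle inequality through the class of $n$-cycles it suffices to couple the uniform measure on a class with $p$ cycles to the uniform measure on the class of $n$-cycles at Hamming cost $p/n$, with correct marginals: realise a class element $\sigma$ as a cyclic word by listing its cycles in a uniformly random order, each started at a uniformly random point, and let $\rho$ be the $n$-cycle read off from the same cyclic word; then $\rho$ is uniform among $n$-cycles and differs from $\sigma$ only in the image of one point per cycle. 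This yields
\[
d(\mu_1,\mu_2)\ \le\ \frac{1}{n}\Bigl(\mathbb{E}_{\sigma\sim\mu_1}\bigl[\#\mathrm{cyc}(\sigma)\bigr]+\mathbb{E}_{\tau\sim\mu_2}\bigl[\#\mathrm{cyc}(\tau)\bigr]\Bigr).
\]
Since $\#\mathrm{cyc}$ is a class function and $\mathbb{E}_{\mu_{\rm unif}}[\#\mathrm{cyc}]=H_n\le 1+\log n$, the theorem is reduced to the purely combinatorial estimate
\[
\mathbb{E}\bigl[\#\mathrm{cyc}\bigl(w(\sigma_1,\dots,\sigma_r)\bigr)\bigr]\ \le\ c_\eps\, l^2\, n^{2/3+\eps}\qquad\text{for uniform }(\sigma_1,\dots,\sigma_r)\in S_n^r.
\]

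\emph{Lazy orbit exploration.}
To bound the number of cycles of $\pi\coloneqq w(\sigma_1,\dots,\sigma_r)$ I would expose the values of the $\sigma_i$ only on demand while tracing $\pi$-orbits: from a point $p_0$ compute $\pi(p_0),\pi^2(p_0),\dots$ by applying the $l$ letters of $w$ one at a time, sampling $\sigma_i^{\pm1}$ uniformly among the still-unexposed points whenever it is applied outside its currently exposed domain and image, and restarting from a fresh point whenever an orbit closes. The key claim is that, as long as fewer than $n-n^{2/3}$ points have been exposed, with probability $1-c_\eps\, l^2\, n^{-1/3+\eps}$ the process behaves exactly like the analogous exploration of a uniformly random permutation: every letter-application lands on a fresh point and every orbit closes at the uniform rate $\asymp 1/n$ per step. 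On this event the orbits discovered before the exposed set reaches size $n-n^{2/3}$ are a subset of the cycles of a uniform permutation, so $O(\log n)$ of them in expectation, the at most $n^{2/3}$ points never reached lie in at most $n^{2/3}$ further cycles, and on the complementary event $\pi$ has at most $n$ cycles; hence $\mathbb{E}[\#\mathrm{cyc}(\pi)]\le H_n+n^{2/3}+n\cdot c_\eps\, l^2\, n^{-1/3+\eps}\le c'_\eps\, l^2\, n^{2/3+\eps}$, as required. The exponent $\tfrac13$ is precisely the place where the birthday-type coincidences among the exposed values can no longer be controlled.

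\emph{Where regularity enters.}
The requirement that every letter-application hit a fresh point can fail in a \emph{forced}, non-random way only at a critical junction. A critical index of $w$ corresponds to a subword $x_i^{\varepsilon}c\,x_i^{-\varepsilon}$ with $c$ an intermediate constant: tracing through it one applies $\sigma_i^{-\varepsilon}$ to reach a point $q$, then $c$, then $\sigma_i^{\varepsilon}$, and if $q$ is a fixed point of $c$ the last application is forced to return to the previous point, closing a short spurious orbit. Demanding that every critical constant of $w$ be fixed point free excludes this at the interior junctions; demanding it for $w^2$ additionally excludes it at the seam carrying the constant $c_l c_0$ between two consecutive copies of $w$, which is crossed each time an orbit is advanced --- and, as noted in the statement, when $\iota(1)\ne\iota(l)$ or $\varepsilon(1)\ne-\varepsilon(l)$ this seam is harmless, so only the critical constants of $w$ itself matter. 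Finally, that $w$ is not conjugate to an element of $S_n$ rules out $\mu_w$ being carried by a single conjugacy class --- in which case the conclusion would fail outright --- and is what makes the exploration genuinely query the random permutations.

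\emph{The main obstacle.}
The heart of the matter is the exploration estimate. Collisions with already-exposed values are unavoidable, since each $\sigma_i$ is queried on the order of $n$ times and so most of its queries must repeat earlier ones; the work is to show that, once the forced collisions at critical junctions are ruled out by regularity, the remaining ones merge the orbit traces in the same benign way they implicitly do for a uniform permutation, and that a genuine premature closure of the orbit currently being traced can be charged to a coincidence between two of the $l$ letter-positions under the revealed values. Summing this failure probability over the at most $n$ orbit-advancement steps, together with the $l$-fold branching of each application of $w$, is what produces both the factor $l^2$ and the threshold $n^{2/3}$.
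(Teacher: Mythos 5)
The statement you are proving is stated in the paper as a \emph{conjecture}; the paper offers no proof of it, and explicitly flags it as open. More importantly, your sketch does not actually address it: what you outline is, in essence, the paper's own proof of Theorem \ref{thm:main} (reduction of the Kantorovich--Rubinstein distance to the expected number of cycles via Lemmas \ref{lem:basic} and \ref{lem:ncycle}, followed by a lazy orbit-exploration estimate as in Lemma \ref{lem:eberhard} and Proposition \ref{prop:ebjez}), which requires the word to be \emph{regular}, i.e.\ its critical constants to be fixed point free. The conjecture is precisely the assertion that this hypothesis can be dropped and replaced by non-singularity ($\epsilon(w)\neq 1$). Your paragraph ``Where regularity enters'' makes the dependence explicit: you need fixed-point-freeness of the critical constants to exclude the forced return at a critical junction. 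That is the hypothesis the conjecture removes, so the sketch proves a different (already proved) statement.

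The genuine difficulty, which nothing in your proposal touches, is the case of a non-singular word with a critical constant $c_j$ having many fixed points --- say $n-O(1)$ of them, e.g.\ a transposition. Then at each crossing of the junction $x_i^{-\varepsilon}c_j x_i^{\varepsilon}$ the trajectory backtracks with probability $1-O(1/n)$, the factor $\frac{n-\norm{w}_{\rm crit}+dl}{n-dl}$ in the key estimate is essentially $1$, and the bound of Lemma \ref{lem:eberhard} becomes vacuous. Non-singularity guarantees that cancelling such a junction still leaves a non-trivial word in the variables (this is why the singular example $x^{-1}cxg$ with $c$ a transposition, whose values concentrate near the class of $g$, does not contradict the conjecture), so one expects an argument that replaces $w$ by the shorter non-singular word obtained from this cancellation and controls the exceptional set where the cancellation fails --- an induction on length with error terms that is not present in your sketch and is not present in the paper either. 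As written, your proof establishes the regular case only, and the conjecture remains open.
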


The proof of Theorem \ref{thm:main} is given in the next section while the proof of the crucial proposition is deferred to Section \ref{sec:proof}. The following corollary is already interesting:
\begin{corollary} \label{cor:noconstant}
Let $\varepsilon>0$. There exists a constant $c_\varepsilon>0$ such that the following holds:
If $w \in \mathbb F_r$ be a non-trivial word of length $l$, then we have
$$d(\mu^{S_n}_{w},\mu^{S_n}_{\rm unif}) \leq c_\varepsilon l^2 n^{-1/3 + \varepsilon}.$$
\end{corollary}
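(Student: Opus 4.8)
The plan is to derive Corollary~\ref{cor:noconstant} from Theorem~\ref{thm:main} by reducing the general non-trivial word $w \in \mathbb F_r$ to a regular word with constants in $S_n$. The first observation is that a word with \emph{no} constants at all has no critical constants to worry about, so the fixed-point-freeness condition in the definition of \emph{regular} is vacuous; the only remaining hypothesis is that $w$ is not conjugate to an element of $S_n$. Since $w \in \mathbb F_r$ and $w \neq 1$, the word $w$ is non-trivial in the free product $S_n \ast \mathbb F_r$ and hence not conjugate into $S_n$ (it lies in the free factor $\mathbb F_r$ with $\epsilon(w) = w \neq 1$). Therefore $w$, viewed as an element of $S_n \ast \mathbb F_r$, is a regular word with constants of length $l = |w|$, and Theorem~\ref{thm:main} applies directly, giving
$$d(\Sigma(\mu^{S_n}_w), \mu^{S_n}_{\rm unif}) \leq c_\varepsilon l^2 n^{-1/3+\varepsilon}.$$

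The second step is to remove the symmetrization operator $\Sigma$. Here I would use the fact that for a word \emph{without constants}, the pushforward measure $\mu^{S_n}_w$ is already conjugation invariant: for any $\sigma, g_1, \dots, g_r \in S_n$ we have $w(g_1^\sigma, \dots, g_r^\sigma) = w(g_1,\dots,g_r)^\sigma$, and as $(g_1,\dots,g_r)$ ranges uniformly over $S_n^r$ so does $(g_1^\sigma,\dots,g_r^\sigma)$; hence $\mu^{S_n}_w(A^\sigma) = \mu^{S_n}_w(A)$ for all measurable $A \subseteq S_n$ and all $\sigma$. Consequently $\Sigma(\mu^{S_n}_w) = \mu^{S_n}_w$, and the bound from the previous paragraph becomes exactly $d(\mu^{S_n}_w, \mu^{S_n}_{\rm unif}) \leq c_\varepsilon l^2 n^{-1/3+\varepsilon}$, which is the claim. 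One should also handle the degenerate edge case $l = 0$: if $|w| = 0$ then $w$ reduces to the empty word, i.e.\ $w = 1$, contradicting non-triviality, so in fact $l \geq 1$ always and Theorem~\ref{thm:main} is applicable without caveat. (If one prefers to allow $w$ to have a free-reduced form of length $l$ that is positive, this is automatic from $w \neq 1$.)

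There is essentially no obstacle here: the corollary is a direct specialization of the main theorem together with the elementary remark that constant-free word maps are equivariant under the conjugation action and therefore produce conjugation-invariant pushforwards. The only point requiring a line of care is matching up the notion of \emph{length}: the length $|w| = l$ used in Theorem~\ref{thm:main} counts the syllables of the reduced word-with-constants expression, and for $w \in \mathbb F_r$ with no constants this coincides with the usual free-group word length of $w$ (each generator occurrence is one syllable, with the trailing and leading constants being trivial), so the statement of the corollary is literally the statement of Theorem~\ref{thm:main} in this special case. Thus the proof is a short paragraph invoking Theorem~\ref{thm:main} and the equivariance identity $w(\underline{g}^\sigma) = w(\underline{g})^\sigma$.
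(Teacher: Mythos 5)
Your proposal is correct and matches the paper's (implicit) argument: the paper states no proof of the corollary, but it derives it exactly as you do, by observing that a non-trivial constant-free word is regular (no critical constants, and $\epsilon(w)=w\neq 1$ rules out conjugacy into $S_n$) and that, as noted in Section~\ref{sec:conj}, $\mu^{S_n}_w$ is already conjugation invariant for $w\in\mathbb F_r$, so $\Sigma(\mu^{S_n}_w)=\mu^{S_n}_w$ and Theorem~\ref{thm:main} applies verbatim. Your handling of the length convention and the degenerate case $l=0$ is also fine.
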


This result is a weak form of asymptotic equidistribution of word values, analogous to Theorem \ref{thm:dense}. Note however that neither implies the other.

\section{Invariant probability measures on $S_n$}
\label{sec:conj}

\begin{lemma} \label{lem:basic}
The equidistribution on a conjugacy class of a permutation with $d$ cycles to the equidistribution of long cycles has Kantorovich-Rubinstein distance equal to $d/n.$
\end{lemma}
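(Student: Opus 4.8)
The plan is to construct an explicit coupling realizing the claimed transport cost and then verify it is optimal by a lower-bound argument. Let $\sigma \in S_n$ be a permutation with exactly $d$ cycles, say of lengths $\ell_1,\dots,\ell_d$ with $\sum_i \ell_i = n$, and let $\tau$ range over the conjugacy class of $n$-cycles. First I would exhibit a single long cycle $\tau_0$ obtained from $\sigma$ by "splicing" its cycles together: pick one distinguished point $p_i$ in each cycle of $\sigma$ and redefine the images of $p_1,\dots,p_d$ so that the $d$ cycles are concatenated into one $n$-cycle, leaving all other $n-d$ points untouched. Then $d(\sigma,\tau_0) \le d/n$. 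Conjugating this construction, for each $\tau$ in the target class one gets a representative at Hamming distance at most $d/n$, and averaging gives a coupling $\gamma$ between the equidistribution on the class of $\sigma$ and the equidistribution on long cycles with $\int d \, d\gamma \le d/n$; this yields the upper bound $d(\mu,\nu)\le d/n$.

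For the lower bound, the key point is that changing the image of a single point can increase the number of cycles by at most one and decrease it by at most one: multiplying a permutation by a transposition changes the cycle count by $\pm 1$. More precisely, if $\pi,\pi'$ agree off a set $S$ of size $k$, then $\pi' = \pi \cdot \rho$ where $\rho$ is supported on a set of size at most $2k$ (the points moved, together with their images), hence $\rho$ is a product of at most $2k-1$ transpositions — actually one can do better: $\rho$ permutes a set of size $\le 2k$ but I only need that $\rho$ moves at most... let me instead argue directly. If $\pi'$ differs from $\pi$ at the $k$ points of $S$, write $\pi'\pi^{-1}$; this permutation fixes everything outside $\pi(S)$, a set of size $k$, so it is a product of at most $k-1$... no: a permutation supported on $m$ points is a product of at most $m-1$ transpositions when it is a single cycle, but in general of at most $m - (\text{number of its cycles}) \le m-1$ transpositions. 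In any case $\pi'\pi^{-1}$ is a product of at most $k$ transpositions, so the cycle counts of $\pi$ and $\pi'$ differ by at most $k$. Since the target permutations have $1$ cycle and the source has $d$, any coupling must move, on average, at least $d-1$ points, giving $\int d\,d\gamma \ge (d-1)/n$.

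This leaves a gap of $1/n$ between the bounds, so the argument needs sharpening. The honest statement is that to go from $d$ cycles to $1$ cycle one needs exactly $d-1$ transpositions at minimum — but a single transposition $(a\,b)$ with $a,b$ in different cycles changes the images of \emph{two} points $a$ and $b$, so it contributes $2/n$, not $1/n$, to the Hamming distance; conversely if $a,b$ lie in the same cycle it splits it. The correct bookkeeping: merging the $d$ cycles of $\sigma$ into one long cycle requires at least $d-1$ merging transpositions, but these can share endpoints (a path of transpositions $(p_1\,p_2)(p_2\,p_3)\cdots(p_{d-1}\,p_d)$ moves only the $d$ points $p_1,\dots,p_d$), so the minimum Hamming cost is exactly $d/n$ when $d\ge 1$ — wait, $(p_1\,p_2)(p_2\,p_3)\cdots(p_{d-1}\,p_d)$ is a $d$-cycle on $d$ points, moving all $d$ of them, cost $d/n$, and it does merge $d$ cycles into one. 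And one cannot do it with fewer than $d-1$ transpositions in total, and any set of $d-1$ transpositions whose product has full support on the union of cycles moves at least... I would need at least $d$ points moved: if only $d-1$ points moved, the product would be supported on $\le d-1$ points, hence a product of $\le d-2$ transpositions, too few to connect $d$ cycles. Hence every $\tau$ in the target class has $d(\sigma,\tau)\ge d/n$, so $d(\mu,\nu)\ge d/n$, matching the upper bound.

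The main obstacle is precisely this lower bound: making rigorous that any permutation within Hamming distance $< d/n$ of $\sigma$ still has more than one cycle. I would phrase it via the inequality $|c(\pi) - c(\pi')| \le |\{i : \pi(i)\ne\pi'(i)\}|$ relating cycle counts to Hamming distance (provable by the transposition-factorization argument above, being careful that a permutation moving $m$ points factors into at most $m-1$ transpositions only if one is slightly clever, but certainly into $\le m$), and then showing the constant is sharp via the explicit splice. Everything else — the coupling, the conjugation-invariance of the construction, the averaging — is routine.
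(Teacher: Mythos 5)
Your argument is correct and follows essentially the same route as the paper: an explicit splice of the $d$ cycles into one $n$-cycle, symmetrized by conjugation to produce a coupling of cost $d/n$, plus the matching lower bound that any permutation with $d$ cycles is at normalized Hamming distance at least $d/n$ from every $n$-cycle. The paper dismisses that lower bound as obvious, whereas you actually prove it via the transposition-factorization count ($k$ moved points give at most $k-1$ transpositions, hence a cycle-count change of at most $k-1$); just make sure the final write-up keeps the sharp form $|c(\pi)-c(\pi')|\le k-1$ from your third paragraph rather than the weaker $\le k$ you revert to at the end, since the weaker bound only yields $(d-1)/n$.
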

\begin{proof}
For each permutation $\sigma \in S_n$ with $d$ cycles, write $\sigma$ in the cycle notation as
$\sigma = (s_1)(s_2)\cdots(s_d)$, where each $s_i$ is a list of natural numbers in $\{1,\dots,n\}$ such that each number appears in exactly one of the $s_i$'s. Observe that there are exactly $s:=d! \prod_i |s_i|$ ways of writing $\sigma$ in the cycle notation. Consider now the $n$-cycle $(s_1s_2\cdots s_d)$ and note that 
$$d(\sigma,(s_1s_2\cdots s_d))= d/n.$$  

We denote the conjugacy class of $\sigma$ by $[\sigma]$ and set $t := |[\sigma]|.$
Define a probability measure $\gamma$ on $S_n \times S_n$ giving weight $1/st$ to each pair $((t_1)\cdots(t_d),(t_1t_2\cdots t_d))$, where $(t_1)\cdots(t_d)$ is a way of writing $\tau \in [\sigma]$  in the cycle notation. It is obvious that $\pi_{1,*}(\gamma)$ is the equidistribution on $[\sigma]$ and that $\pi_{2,*}(\gamma)$ is the equidistribution on the set of $n$-cycles. Moreover,
$$\int_{S_n \times S_n} d(\sigma,\tau)\ d\gamma(\sigma,\tau) = d/n.$$ This gives an upper bound on the Kantorovich-Rubinstein distance. The lower bound is obvious since any permutation with $d$ cycles has normalized Hamming distance exactly $d/n$ to the set of $n$-cycles.
\end{proof}

\begin{lemma} \label{lem:ncycle}
The Kantorovich-Rubinstein distance of the equidistribution on $S_n$ to the equidistribution on the set of $n$-cycles in $S_n$ is at most $(\ln(n)+1)/n.$
\end{lemma}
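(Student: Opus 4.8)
The plan is to build an explicit transport plan coupling the equidistribution $\mu_{\rm unif}$ on $S_n$ with the equidistribution on the set $C_n$ of $n$-cycles, and to control its cost using the previous lemma. The natural idea is to decompose $\mu_{\rm unif}$ over the conjugacy classes of $S_n$: writing $P(n)$ for the set of partitions of $n$, we have $\mu_{\rm unif} = \sum_{\lambda \vdash n} \frac{|[\sigma_\lambda]|}{n!}\, \nu_\lambda$, where $\nu_\lambda$ is the equidistribution on the conjugacy class $[\sigma_\lambda]$ of a permutation of cycle type $\lambda$. For each $\lambda$ with $d(\lambda)$ parts, Lemma \ref{lem:basic} gives a coupling $\gamma_\lambda$ of $\nu_\lambda$ with the equidistribution on $C_n$ of cost exactly $d(\lambda)/n$. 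Averaging these couplings with the weights $|[\sigma_\lambda]|/n!$ produces a coupling $\gamma$ of $\mu_{\rm unif}$ with the equidistribution on $C_n$ whose cost is
$$d(\mu_{\rm unif},\text{equidist. on }C_n) \leq \sum_{\lambda \vdash n} \frac{|[\sigma_\lambda]|}{n!}\cdot\frac{d(\lambda)}{n} = \frac{1}{n}\, \mathbb E[\#\text{cycles of a uniform }\sigma \in S_n].$$

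So the lemma reduces to the classical fact that the expected number of cycles of a uniformly random permutation of $S_n$ is the harmonic number $H_n = \sum_{k=1}^n \frac 1k$. I would prove this by the standard linearity-of-expectation argument: the number of cycles equals $\sum_{k=1}^n X_k$, where $X_k$ is the indicator that, in the process of building the cycle containing $1$, then the smallest unused point, and so on, the $k$-th "record" closes a cycle; equivalently one uses the Feller coupling / Chinese restaurant construction in which the probability of closing off a cycle at the $k$-th step is $1/(n-k+1)$, so $\mathbb E[\#\text{cycles}] = \sum_{j=1}^n \frac 1j = H_n$. Finally I bound $H_n \leq \ln(n) + 1$ by comparing the sum with the integral $\int_1^n \frac{dx}{x} = \ln n$ (the first term contributes $1$, and $\frac 1k \leq \int_{k-1}^k \frac{dx}{x}$ for $k \geq 2$), which yields the stated bound $(\ln(n)+1)/n$.

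There is essentially no hard obstacle here: the only points requiring a little care are checking that a convex combination of couplings is again a coupling with the correspondingly averaged cost (immediate from linearity, since $\pi_{i,*}$ is linear and the weights $|[\sigma_\lambda]|/n!$ sum to $1$ and correctly reconstitute $\mu_{\rm unif}$), and making sure the target marginal is the \emph{same} equidistribution on $C_n$ for every $\lambda$ — which it is, since Lemma \ref{lem:basic} always transports to the equidistribution on the set of $n$-cycles regardless of the source conjugacy class. The expectation computation is textbook, so the whole argument is short.
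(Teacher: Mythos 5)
Your proposal is correct and follows essentially the same route as the paper: the paper's proof is exactly the combination of Lemma \ref{lem:basic} applied classwise with the fact that the expected number of cycles of a uniform permutation is $H_n \leq \ln(n)+1$. You have merely spelled out the averaging of couplings and the harmonic-number computation that the paper leaves implicit.
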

\begin{proof}
This is a consequence of the previous lemma and the fact that the expected number of cycles of a random permutation in $S_n$ is $$1+\frac12 + \cdots \frac1n \leq \ln(n) +1.$$ This proves the claim.
\end{proof}

For $\sigma \in S_n$, we denote by $F_i(\sigma)$ the set of elements contained in an $i$-cycle and set $F_{\leq h}(\sigma) = \bigcup_{1 \leq i \leq h} F_i(\sigma)$. We also set $f_i(\sigma)= |F_i(\sigma)|$ and $f_{\leq h} = |F_{\leq h}(\sigma)|$.

Following the work of Eberhard-Jezernik \cite[Appendix A]{MR4359476} and extending techniques from \cite{MR4694588}, we prove the following proposition in Section \ref{sec:proof}. This result is the crucial computation in order to prove our main theorem.

\begin{proposition} \label{prop:ebjez}
There exists a constant $c>0$ such that the following holds for all $f,h,n \geq 0$. Let $w \in S_n \ast {\rm F}_r$ be regular of  length $l<c f^{1/2}/h$.  Then, we have
$$\mathbb{P}_{\sigma \in S^r_n}(f_{\leq h}(w(\sigma_1,\dots,\sigma_r)) \geq f)\leq \exp\left(-cf/(lh)^2\right).$$
\end{proposition}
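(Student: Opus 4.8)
The plan is to estimate the probability that $w(\sigma_1,\dots,\sigma_r)$ has many points lying in short cycles by a union bound over possible ``short-cycle patterns'' together with a careful count of the number of tuples $(\sigma_1,\dots,\sigma_r)$ that are consistent with a given pattern. Concretely, if $f_{\leq h}(w(\sigma))\geq f$, then there is a set $S\subseteq\{1,\dots,n\}$ of size roughly $f$ (we may take $\lfloor f\rfloor$, up to rounding which I will suppress) together with a permutation $\pi$ of $S$, all of whose cycles have length $\leq h$, such that $w(\sigma)$ restricted to $S$ equals $\pi$. I would first fix such a pair $(S,\pi)$ and bound the number of $\sigma\in S_n^r$ for which $w(\sigma)|_S=\pi$, then sum over the at most $\binom{n}{f}$ choices of $S$ and the at most $(\text{number of permutations of $S$ with cycles of length}\leq h)$ choices of $\pi$; the latter is crudely bounded by $h^{f}$, which is the source of the $h$-dependence, so $\ln(\text{number of patterns})\lesssim f\ln(nh/f)+f$. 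The heart of the matter is therefore a \emph{counting/probability estimate}: for a fixed regular word $w$ of length $l$ and a fixed constraint $w(\sigma)|_S=\pi$ of the above type, show
$$\mathbb{P}_{\sigma\in S_n^r}\big(w(\sigma)|_S=\pi\big)\leq \exp\big(-c'f\ln(n)/(lh)^2 + \text{lower order}\big),$$
so that the union bound gives room to absorb the $\exp(f\ln(nh/f))$ factor as long as $l<cf^{1/2}/h$ (this is exactly the regime where $f/(lh)^2$ dominates $f$ and $f\ln(\cdots)$, after a logarithmic adjustment).

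To get the per-pattern estimate I would follow the Eberhard--Jezernik approach: evaluate the word one letter at a time, tracking the trajectory of each of the relevant points of $S$ under the partial products $c_0x_{\iota(1)}^{\varepsilon(1)}c_1\cdots c_{j-1}x_{\iota(j)}^{\varepsilon(j)}c_j$. Reveal the values $\sigma_i(p)$ (or $\sigma_i^{-1}(p)$) of the random permutations lazily, only when they are first needed to continue a trajectory. Each time a value of some $\sigma_i$ is revealed at a not-yet-determined point, the outcome is (conditionally) nearly uniform over the unrevealed points, so the probability that it lands where the constraint $w(\sigma)|_S=\pi$ forces it to is $O(1/n)$ — \emph{unless} the step is ``forced for free'', i.e.\ the value is already determined by a previously revealed value of $\sigma_i$ or $\sigma_i^{-1}$. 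The key structural input is that regularity of $w$ rules out the cancellation pattern $x^{-1}cx$ (for $w$, or $x c x^{-1}$ etc.\ for $w^2$, with the critical constants fixed-point-free), which is precisely what would allow a long run of letters to be traversed with no fresh randomness; because short cycles of $\pi$ have length $\leq h$, each point of $S$ must return to its start after traversing at most $h$ copies of $w$, i.e.\ at most $hl$ letters, and along such a closed trajectory one can guarantee at least (say) a constant fraction of $\Omega(hl/(lh)^2) = \Omega(1/(lh)\cdot\text{something})$ — more precisely $\Omega(1)$ per block and, after organizing the $f$ points of $S$ into $\gtrsim f/(hl)$ ``independent'' closed trajectories each contributing an independent factor $O(1/n)$ — wait, let me restate: one extracts $\Omega(f/(lh)^2)$ genuinely new, independent random choices each contributing a factor bounded away from $1$, hence the bound $\exp(-cf/(lh)^2)$. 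I would make the independence rigorous via a martingale / Azuma-type argument (a Doob martingale on the sequence of revealed values), or equivalently by conditioning and multiplying conditional probabilities along a carefully chosen order of revelation.

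More concretely, the combinatorial bookkeeping I expect to carry out is: (i) build a graph/forest on $S$ recording which revelations of which $\sigma_i$ at which points are needed; (ii) argue that the number of revelations is at least some constant times the number of points of $S$ divided by the ``block length'' $\sim lh$, using regularity to prevent free traversal and the short-cycle hypothesis to cap trajectory length; (iii) argue that a definite fraction of these revelations are ``fresh'' (land on an as-yet-undetermined point) and so each contributes a conditional probability $\leq C/n$ of matching the forced value — actually $\leq 2/n$ say, once $n$ is not too small, because at each step at least $n/2$ of the points are still free (one can restrict attention to patterns with $f\leq n/2$; larger $f$ is handled separately and is in fact easier since then $l<cf^{1/2}/h$ forces $l$ tiny); (iv) combine to get the per-pattern bound and then run the union bound over $(S,\pi)$. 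The role of $w^2$ versus $w$ in the definition of ``regular'' is to handle the boundary case $\iota(1)=\iota(l)$, $\varepsilon(1)=-\varepsilon(l)$, where a trajectory that wraps around from the end of one copy of $w$ to the start of the next could create a hidden $x^{-1}(\cdot)x$ cancellation; passing to $w^2$ exposes it as an intermediate critical constant, and demanding that constant be fixed-point-free restores the ``no free traversal'' property.

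The main obstacle, and the step I expect to be most delicate, is (ii)--(iii): quantifying exactly how many \emph{independent} fresh random choices one is guaranteed along the closed trajectories, with the correct dependence $1/(lh)^2$ rather than, say, $1/(lh)$. This requires carefully handling the ways trajectories of different points of $S$ can collide and share revealed values (so that the same randomness cannot be double-counted), ensuring the collisions do not destroy more than a constant fraction of the budget, and controlling the rounding of $f$ and the regime where $f$ is close to $n$. The factor $(lh)^2$ (rather than $lh$) should emerge from the fact that along a trajectory of length $\leq hl$ one is only guaranteed \emph{one} fresh choice per such trajectory in the worst case (a single long forced run can be broken only once, by regularity), while organizing the $f$ points into disjoint trajectories costs another factor $\sim hl$; making this tight is where I would spend the most care, cross-checking against the Eberhard--Jezernik computation in \cite[Appendix A]{MR4359476} and the refinements in \cite{MR4694588}.
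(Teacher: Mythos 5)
Your inner mechanism --- lazily revealing values of the $\sigma_i$ along trajectories, counting forced versus fresh choices, using regularity (and the passage to $w^2$ for the wrap-around case) to prevent free traversal --- is essentially the paper's Lemma~\ref{lem:eberhard}. But your outer combinatorial scheme has a genuine gap. You propose a union bound over all pairs $(S,\pi)$ with $|S|\approx f$ and $\pi$ a permutation of $S$ with cycles of length at most $h$; the entropy of this family is of order $f\ln(nh/f)$. Against this you can only guarantee, per pattern, a number of fresh revelations of order $f/(lh)^2$ in the worst case (as you yourself note, a closed trajectory of length $\le lh$ may yield only one fresh choice), hence a per-pattern probability no better than $n^{-O(f/(lh)^2)}$. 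Your claim that the hypothesis $l<cf^{1/2}/h$ puts you ``in the regime where $f/(lh)^2$ dominates $f$ and $f\ln(\cdots)$'' is false: that hypothesis only gives $f/(lh)^2 \gtrsim 1/c^2$, a constant, not $\gtrsim f$. In the regime actually used in the proof of Theorem~\ref{thm:main} (where $h\sim n^{1/3-\varepsilon}$ and $f\sim l^2n^{2/3}$, so $lh\sim n^{1/3}$), one has $f\ln(nh/f)\sim f\ln n$ while $(f/(lh)^2)\ln n\sim n^{-2/3}f\ln n$, so the union bound over full patterns loses by an enormous margin and the argument collapses.

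The fix, which is what the paper (following Eberhard--Jezernik, Theorem A.4) does, is to replace the union bound over size-$f$ patterns by a low-moment argument over size-$d$ subsets with $d\approx f/(lh)^2$ \emph{small}: from Lemma~\ref{lem:eberhard} and regularity one gets
$\mathbb{P}\bigl(X\subseteq F_{\leq h}(w(\sigma))\bigr)\leq\bigl(d(lh)^2/(n-dlh)\bigr)^d$
for every $d$-element set $X$, and then Markov applied to $\binom{f_{\leq h}}{d}$ gives
$\mathbb{P}(f_{\leq h}\geq f)\leq\binom{n}{d}\binom{f}{d}^{-1}\bigl(d(lh)^2/(n-dlh)\bigr)^d\lesssim\bigl(e\,d(lh)^2/f\bigr)^d$,
which upon choosing $d\asymp f/(lh)^2$ yields $\exp(-cf/(lh)^2)$. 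The point is that requiring only that $d$ chosen points lie in short cycles (rather than prescribing the entire restriction $w(\sigma)|_S=\pi$) costs entropy only $(n/f)^d$, which the per-subset bound comfortably beats; no specification of the cycle structure on $S$, and no Azuma-type concentration, is needed. I recommend you keep your trajectory analysis but swap the union bound for this moment computation.
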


Assuming this proposition, we are now ready to prove our main result.

\begin{proof}[Proof of Theorem \ref{thm:main}] Let $w \in S_n \ast \mathbb F_r$ of length $l$ be fixed. Let $\varepsilon>0$ be arbitrary. Consider Proposition \ref{prop:ebjez} with $h:=c n^{1/3 - \varepsilon}$ and $f:= l^2 n^{2/3}$. Note that $l < cf^{1/2}/h$  for large $n$ as required. Now, the conclusion is that apart from a set of $\mu_{w,S_n}$-measure bounded by $\exp( -n^{2\varepsilon}/c)$, the distribution of word values contains only permutations with the property that the number of points contained in cycles of length less or equal to $c n^{1/3 - \varepsilon}$ is bounded by $l^2 n^{2/3}$. Thus, for those permutations, the number of cycles is at most
$$l^2 n^{2/3} + \frac{n}{c n^{1/3 - \varepsilon}} \leq \left(l^2 + \frac1c\right) n^{2/3 + \varepsilon.}$$
We conclude from Lemma \ref{lem:basic} that the distance of $\Sigma(\mu_{w,S_n})$ to the equidistribution of $n$-cycles with respect to the Kantorovich-Rubinstein metric is at most 
$$\left(l^2 + \frac1c \right) n^{2/3 + \varepsilon}/n + \exp( -n^{2\varepsilon}/c)$$ and hence the distance to the equidistribution at most
$$\left(l^2 + \frac1c \right) n^{-1/3 + \varepsilon}+\exp( -n^{2\varepsilon}/c)+\frac{\ln(n)+1}{n}.$$  This proves the claim.
\end{proof}

Theorem \ref{thm:main} is most interesting in case $\mu_{w,S_n}$ is conjugation invariant, i.e., $\mu_{w,S_n}=\Sigma(\mu_{w,S_n})$, or in case there is at least some asymptotic form of invariance. Both happen naturally in interesting examples. First of all note that $\mu_{w,S_n}$ is invariant in case $w \in \mathbb F_r$. Secondly, for any $w \in S_n \ast \mathbb F_r$ it is automatically satisfied for $w'(x_1,\dots,x_{r+1})= x_{r+1} w(x_1,\dots,x_r) x_{r+1}^{-1} \in S_n \ast \mathbb F_{r+1}.$ 

Even more interesting, we may use Theorem \ref{thm:main} to show that distributions of word values are almost conjugation invariant in interesting non-trivial cases. For example, the distribution of word values of $w(x)=xgx$ is the same as the one for $w'(x)=x^2g$ (just changing the variable from $x$ to $g^{-1}x$). Since the distribution for $w''(x)=x^2$ is asymptotically equal to the equidistribution, the same holds for $w'$,  and hence for $w.$ We do not know how to deal with $w(x)=x^2gx$ or $w(x)=x^2gx^{-1}$ with a similar trick.

\section{Proof of the main estimate (Proposition \ref{prop:ebjez})}
\label{sec:proof}

\begin{lemma}\label{lem:eberhard}
Let $w\in S_n\ast\mathbb{F}_r$ of length $l$ as above and $h \geq 1$. Let $X\subseteq \{1,\dots,n\}$ be a $d$-element subset. Then
$$\mathbb{P}_{\sigma \in S^r_n}(X\subseteq F_{\leq h}(w(\sigma_1,\dots,\sigma_r))\leq\left(\frac{(n-\norm{w^h}_{\rm crit}+dlh)lh}{n-dlh} \right)^d.$$
\end{lemma}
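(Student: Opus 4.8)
The plan is to bound the probability by a lazy exploration of the random permutations $\sigma_{1},\dots,\sigma_{r}$ that, for each point of $X$, simulates the forward orbit under $\tau:=w(\sigma_{1},\dots,\sigma_{r})$ and estimates the chance that each of these $d$ orbits closes up within $h$ steps. First I would note that $x\in F_{\leq h}(\tau)$ if and only if $\tau^{j}(x)=x$ for some $j\in\{1,\dots,h\}$, so $X\subseteq F_{\leq h}(\tau)$ holds exactly when every $x\in X$ returns to itself after being traced through at most $h$ consecutive copies of $w$, i.e.\ through a subword of $w^{h}$, which has length at most $lh$. To exploit this, fix an enumeration $x_{1},\dots,x_{d}$ of $X$ and process the points one after another: while processing $x_{m}$ evaluate $w$ on the current point letter by letter, at most $h$ times, revealing the value of a generator $\sigma_{i}^{\pm1}$ at a point only the first time it is needed, as a uniformly random element among the values still available. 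Since $|w|=l$, processing $x_{m}$ costs at most $lh$ generator-evaluations, so the whole procedure costs at most $dlh$ of them; hence at every stage each $\sigma_{i}$ has at most $dlh$ revealed values, and a fresh evaluation is uniform over a set of size at least $n-dlh$. Write $H_{m}$ for the history accumulated once $x_{1},\dots,x_{m-1}$ have been processed.

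The heart of the argument is to show that, whatever the already-revealed data $H_{m}$,
$$\mathbb{P}\bigl(x_{m}\in F_{\leq h}(\tau)\mid H_{m}\bigr)\leq\frac{(n-\norm{w^{h}}_{\rm crit}+dlh)\,lh}{n-dlh}.$$
Here one analyses how the orbit of $x_{m}$ can come back to $x_{m}$. Such a return occurs at the end of one of the $\leq h$ copies of $w$; if within that copy at least one generator-evaluation was fresh, take the last such one --- after it the remaining letters of the copy together with the trailing constant act by a fixed partial injection, so the orbit lands on $x_{m}$ only if that fresh value is the unique preimage of $x_{m}$ under this forced tail, an event of conditional probability at most $\tfrac{1}{n-dlh}$; union-bounding over the $\leq lh$ generator-positions of $w^{h}$ at which this deciding fresh evaluation can sit produces the factor $lh$. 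It remains to bound the returns that are entirely forced by the data already revealed, and here regularity is decisive: every critical constant of $w^{h}$ already occurs among the critical constants of $w^{2}$, hence is fixed-point-free, which prevents a critical index from being silently forced and keeps the number of forced-return configurations under control; this is what the quantity $\norm{w^{h}}_{\rm crit}$ measures, and each such configuration again contributes at most $\tfrac{1}{n-dlh}$.

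Granting this, writing $P_{m}:=\mathbb{P}(x_{m}\in F_{\leq h}(\tau)\mid H_{m})$, the tower property applied successively over $m=1,\dots,d$ gives
$$\mathbb{P}\bigl(X\subseteq F_{\leq h}(\tau)\bigr)=\mathbb{E}\Bigl[\prod_{m=1}^{d}P_{m}\Bigr]\leq\Bigl(\frac{(n-\norm{w^{h}}_{\rm crit}+dlh)\,lh}{n-dlh}\Bigr)^{d},$$
which is the assertion. The hard part will be the conditional estimate in the middle paragraph: keeping careful track of the interplay between fresh and forced generator-evaluations along the orbit of $x_{m}$, and in particular excluding loop closures forced by the word structure at critical indices --- exactly the point where fixed-point-freeness of the critical constants of $w^{h}$ enters, extending the combinatorial technique of Eberhard-Jezernik.
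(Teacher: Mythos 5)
Your overall strategy -- lazily revealing the values of $\sigma_1,\dots,\sigma_r$ along the trajectories of the points of $X$, so that each fresh evaluation is uniform over at least $n-dlh$ values, and then taking a union bound over the position of a ``deciding'' evaluation -- is the same family of argument as the paper's. But the key conditional estimate, which you yourself flag as the hard part, has two genuine gaps. First, by processing the points $x_1,\dots,x_d$ one \emph{full trajectory at a time}, you allow the history $H_m$ to contain the entire orbit of $x_m$: if $x_m$ was visited while exploring an earlier point's trajectory, every evaluation along $x_m$'s orbit can be forced, and then $\mathbb{P}(x_m\in F_{\leq h}(\tau)\mid H_m)$ is $0$ or $1$, not at most $(n-\norm{w^h}_{\rm crit}+dlh)lh/(n-dlh)$. (Take $w=x$, $h=2$, $X=\{1,2\}$: after revealing $\sigma(1)=2$, $\sigma(2)=1$ for the first point, the second point's return is certain.) The paper avoids exactly this by running the induction in the order ``letter position first, point second'', so that when a new line is opened the only arrows revealed at that letter position have the distinct elements of $X$ as sources and the first step of every line is guaranteed to be a free choice; the cost of a later forcing is then always charged to a free choice \emph{inside the same line}. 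Your per-point factorization would need this repair or an extra union bound over which earlier trajectory swallowed $x_m$.

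Second, the ``unique preimage under an injective forced tail'' step is false precisely at critical indices, and this is where the term $n-\norm{w^h}_{\rm crit}$ has to come from. If position $p$ carries the letter $x_{\iota(j)}^{\varepsilon(j)}$ with $j\in J_-(w^h)$ and the fresh value $v$ lands in $\fix(c_j)$, the next evaluation reuses the arrow just created in the reverse direction and the trajectory backtracks to $\omega^{\varepsilon(j)}_{j}$ \emph{for every such $v$}; the forced tail is therefore constant, not injective, on $\fix(c_j)$, and the set of bad values at position $p$ has size up to $n-\norm{c_j}_{\rm H}+dlh$ rather than $1$. Your argument as written would prove the stronger (and, for non-regular $w$, false) bound $\bigl(\frac{(1+dlh)lh}{n-dlh}\bigr)^d$, and the missing mass is then attributed to ``returns that are entirely forced'', which is both the wrong place (those have conditional probability $0$ or $1$, not $\leq 1/(n-dlh)$) and invokes regularity, which is \emph{not} a hypothesis of this lemma -- it only enters in the corollary. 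The paper sidesteps the whole injectivity issue by union-bounding over the position of the \emph{first} forced choice rather than the last fresh one: the event ``first forcing at step $j+1$'' depends only on the single free choice at step $j$ hitting a bad set, whose size is $\leq dl$ for $j\in J_0\cup J_+$ and $\leq \card{\fix(c_j)}+dl\leq n-\norm{w}_{\rm crit}+dl$ for $j\in J_-$, and the loop is not even required to close. I would recommend restructuring your union bound accordingly.
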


\begin{proof} Let's first discuss the case $h=1$. We set $X= \{\alpha_1,\dots,\alpha_d\}.$ In order to study the event that $X \subseteq F_1(w(\sigma_1,\dots,\sigma_r))$, the following diagram is given to clarify various aspects of the proof:

\begin{gather*}
\alpha_1=\omega_{1,1}^{\varepsilon(1)}\stackrel{\sigma^{\varepsilon(1)}_{\iota(1)}}{\mapsto}
\omega_{1,1}^{-\varepsilon(1)}\stackrel{c_1}{\mapsto} \cdots
\stackrel{c_{l-1}}{\mapsto}
\omega_{1,l}^{\varepsilon(l)}\stackrel{\sigma^{\varepsilon(l)}_{\iota(l)}}{\mapsto}\omega_{1,l}^{-\varepsilon(l)}\stackrel{c_l}{\mapsto}\alpha_1\\
\vdots\\
\alpha_d=\omega_{d,1}^{\varepsilon(1)}\stackrel{\sigma^{\varepsilon(1)}_{\iota(1)}}{\mapsto}
\omega_{d,1}^{-\varepsilon(1)}\stackrel{c_1}{\mapsto} \cdots
\stackrel{c_{l-1}}{\mapsto}
\omega_{d,l}^{\varepsilon(l)}\stackrel{\sigma^{\varepsilon(l)}_{\iota(l)}}{\mapsto}\omega_{d,l}^{-\varepsilon(l)}\stackrel{c_l}{\mapsto}\alpha_d.
\end{gather*}

In an inductive procedure, we will define arrows that are connecting elements of $\{1,\dots,n\}$ and are colored by one of the colors $k\in\set{1,\ldots,r}$. The induction is carried out on the ordered set $\set{1,\ldots,d}\times\set{1,\ldots,l}$ in its lexicographical order, i.e.\ $(i',j')<(i,j)$ if and only if $j'<j$, or $j'=j$ and $i'<i$. 
In step $(i,j)$ we define the arrow 
$$
\alpha_{i,j}\colon\omega_{i,j}^{+1}\mapsto\omega_{i,j}^{-1}
$$ 
which is colored by the color $\iota(j)$. 

We collect the $k$-arrows defined up to step $(i,j)$ in the set $$\Lambda_{i,j,k}\coloneqq\set{\alpha_{i',j'}}[(i',j')\leq(i,j)\text{ and } \iota(j')=k].$$ In order that these $k$-arrows extend to permutations $\sigma_k$ for $k\in\set{1,\ldots,r}$, we must require that any two of these $k$-arrows $\alpha_{i',j'}$ and $\alpha_{i,j}$ for $(i',j')<(i,j)$ are either identical or  do not have the same source or target.
Also, as the $\sigma_k$ are supposed to be bijections, we must have that the $d$ elements $\omega_{1,j}^\varepsilon,\ldots,\omega_{d,j}^\varepsilon\in\Omega$ are pairwise distinct for $\varepsilon\in\set{\pm1}$ and $j\in\set{1,\ldots,l}$. 
We set $\Lambda_{i,j,k}^\varepsilon\coloneqq\set{\alpha^\varepsilon}[\alpha\in\Lambda_{i,j,k}]$, where $\alpha^\varepsilon$ is the $\varepsilon$-source of $\alpha$ for $\varepsilon\in\set{\pm1}$. 

In the inductive procedure, at step $(i,j)$, we only have to define the element $\omega_{i,j}^{-\varepsilon(j)}$. The rest is determined by the formula 
\begin{equation}
\label{formula1}\omega_{i,j+1}^{\varepsilon(j+1)}=\omega_{i,j}^{-\varepsilon(j)}.c_j,  \quad j\in\set{1,\ldots,l-1}, \end{equation} and $\omega_{i,1}^{\varepsilon(1)}\coloneqq\alpha_i$ for $i\in\set{1,\ldots,d}$. Note that in step $(i,j)$ of the induction, all $\omega_{i',j'}^{\pm1}$ for $(i',j')<(i,j)$ are already fixed, the $\omega_{i',j}^{\varepsilon(j)}$ for $i'\geq i$ are already defined, and the $\omega_{i',j+1}^{\varepsilon(j+1)}$ (for $i'<i$) are already chosen. We are to define $\omega_{i,j}^{-\varepsilon(j)}$ appropriately, which then will define $\omega_{i,j+1}^{\varepsilon(j+1)}$ by \eqref{formula1}. The tuple $(\omega_{i,j}^{\varepsilon(j)},\omega_{i,j}^{-\varepsilon(j)})$ will then determine the arrow $\alpha_{i,j}$.

There are a few cases to consider. First of all, if $\omega_{i,j}^{\varepsilon(j)}$ equals the $\varepsilon(j)$-source of an arrow $\alpha\in\Lambda_{i-1,j,\iota(j)}$, then we are forced to choose $\omega_{i,j}^{-\varepsilon(j)}$ as the $\varepsilon(j)$-target of $\alpha$, i.e.\ $\alpha=\alpha_{i,j}\colon\omega_{i,j}^{+1}\mapsto\omega_{i,j}^{-1}$ which then already lies in $\Lambda_{i-1,j,\iota(j)}$. Denote this by Case~A and call this phenomenon a \emph{forced choice}, in fact there is no real choice to be made.
The opposite case is denoted by Case~B. In this case $\omega_{i,j}^{-\varepsilon(j)}$ can be chosen almost arbitrarily and we say that there is a \emph{free choice} in step $(i,j)$. Without specifying this more precisely, this is a way of constructing $r$ random permutations. This specific way allows us to keep some control on the probability that the trajectory of some of the $d$ starting points $\alpha_1,\dots,\alpha_d$ will ever return to itself.

Throughout let $j\in\set{1,\ldots,l-1}$. We are interested in the probability that in step $(i,j+1)$ we have a forced choice while in step $(i,j)$ we still had a free choice.

We want to choose $\omega_{i,j}^{-\varepsilon(j)}$ and there is no arrow $\alpha\in\Lambda_{i-1,j,\iota(j)}$ with $\alpha^{\varepsilon(j)}=\omega_{i,j}^{\varepsilon(j)}$, otherwise we are in Case~A. Thus we must choose $\omega_{i,j}^{-\varepsilon(j)}$ from the set $\Omega\setminus\Lambda_{i-1,j,\iota(j)}^{-\varepsilon(j)}$ as all the arrows in $\Lambda_{i-1,j,\iota(j)}$ do not have the $\varepsilon(j)$-source $\omega_{i,j}^{\varepsilon(j)}$ and so cannot have the $\varepsilon(j)$-target $\omega_{i,j}^{-\varepsilon(j)}$. These are at least $n-dl$ many possible choices for $\omega_{i,j}^{-\varepsilon(j)}$. 

Now, we distinguish three subcases of Case~B according to the nature of the index $j$:

\emph{Case B.1: $j\in J_+(w)$; i.e.\ $\iota(j)=\iota(j+1)$ and $\varepsilon(j)=\varepsilon(j+1)$.} We look at the choice for $\omega_{i,j+1}^{\varepsilon(j+1)}$. Distinguish two subcases (a) and (b): At first we choose $\omega_{i,j}^{-\varepsilon(j)}.c_j=\omega_{i,j+1}^{\varepsilon(j+1)}$ such that (a) there is an arrow 
$$
\alpha\in\Lambda_{i-1,j,\iota(j+1)}=\Lambda_{i-1,j,\iota(j)}
$$ 
which has $\omega_{i,j+1}^{\varepsilon(j+1)}$ as its $\varepsilon(j+1)$-source or (b) there is an $\omega_{i',j}^{\varepsilon(j)}$ with $i'\geq i$ such that $\omega_{i,j+1}^{\varepsilon(j+1)}=\omega_{i,j}^{-\varepsilon(j)}.c_j=\omega_{i',j}^{\varepsilon(j)}$. Other cases cannot occur.  

In both cases (a) and (b) we run into a forced choice in step $(i,j+1)$. Indeed, in Case (a) we must choose $\omega_{i,j+1}^{-\varepsilon(j+1)}$ as the $\varepsilon(j+1)$-target of the arrow $\alpha=\alpha_{i,j+1}$ from above. In Case (b) until the step $(i,j+1)$, namely in step $(i',j)$, we will add another arrow $\alpha=\alpha_{i',j}\colon\omega_{i',j}^{+1}\mapsto\omega_{i',j}^{-1}$. Hence $\omega_{i,j+1}^{-\varepsilon(j+1)}=\omega_{i',j}^{-\varepsilon(j)}$. Note that there are at most 
$$
\card{\Lambda_{i-1,j,\iota(j+1)}^{\varepsilon(j)}\cup\set{\omega_{i,j}^{\varepsilon(j)},\ldots,\omega_{d,j}^{\varepsilon(j)}}}\leq dl
$$
many choices for $\alpha^{\varepsilon(j)}$ and points $\omega_{i',j}^{\varepsilon(j)}$ ($i'\in\set{i,\ldots,d}$). Hence the probability of running into a forced choice in step $(i,j+1)$ is bounded from above by $\frac{dl}{n-dl}$.

\emph{Case~B.2: $j\in J_0(w)$; i.e.\ $\iota(j)\neq\iota(j+1)$.} Here the proof is almost the same as the one of Case~B.1, we only do not need to take the $\omega_{i',j}^{\varepsilon(j)}$ into account ($i'\geq i$). We again get a factor $\frac{dl}{n-dl}$ for the probability that in step $(i,j+1)$ we have a fixed choice.

\emph{Case~B.3: $j\in J_-(w)$; i.e.\ $\iota(j)=\iota(j+1)$ and $\varepsilon(j)=-\varepsilon(j+1)$.} If we choose $\omega_{i,j}^{-\varepsilon(j)}\in\fix(c_j)$, then in step $(i,j+1)$ we will have the forced choice $\omega_{i,j+1}^{-\varepsilon(j+1)}=\omega_{i,j}^{\varepsilon(j)}$. These are $n-\norm{c_j}_{\rm H}\leq n-\norm{w}_{\rm crit}$ many possibilities.

In the opposite case, $\omega_{i,j+1}^{\varepsilon(j+1)}=\omega_{i,j+1}^{-\varepsilon(j)}=\omega_{i,j}^{-\varepsilon(j)}.c_j\neq\omega_{i,j}^{-\varepsilon(j)}$ can be the $\varepsilon(j)$-target (i.e.\ the $\varepsilon(j+1)$-source) of an arrow $\alpha\in\Lambda_{i-1,j,\iota(j+1)}$ different from $\alpha_{i,j}\colon\omega_{i,j}^{+1}\mapsto\omega_{i,j}^{-1}$. These are $d(j-1)+i-1$ many arrows. Assume there is no such $\alpha$, then there is a step $(i',j)$ with $i'>i$, such that $\omega_{i',j}^{\varepsilon(j)}$ is the $\varepsilon(j)$-source and $\omega_{i,j}^{-\varepsilon(j)}.c_j$ the $\varepsilon(j)$-target of the arrow $\alpha=\alpha_{i',j}$, which we define this way. These are $d-i$ many possibilities. Together with the above number of arrows, we get $d(j-1)+i-1+d-i=dj-1<dl$ many choices, each of which has probability $\frac{1}{n-dl}$.

Hence there is a probability of $\frac{\card{\fix(c_j)}+dl}{n-dl}\leq\frac{n-\norm{w}_{\rm crit}+dl}{n-dl}$ to arrive at a forced choice in this subcase.

\emph{Case~C: $j=l$.} Then we have the obstruction that $\omega_{i,l}^{-\varepsilon(l)}.c_l=\alpha_i$. The possibilities to choose $\omega_{i,l}^{-\varepsilon(l)}$ are bounded from below by $n-dl$ unless $(i,l)$ is a forced choice. Thus we get a factor of $\frac{1}{n-dl}$

We proceed with the final counting argument. We walk through all steps $(i,j)$ in lexicographical order to bound the probability that all $\alpha_i$ for $i\in\set{1,\ldots,d}$ are fixed by $w$. In this case, each line starts with a free choice and either (i) runs into a forced choice after finitely many free choices or (ii) continues with free choices until the last choice is according to Case C. In total there are at most $l^d$ cases according to the position of the first forced choice. In Case~(i), the occurrence of the first forced choice gives a factor bounded by $\frac{n-\norm{w}_{\rm crit}+dl}{n-dl}$. In Case~(ii), we get a factor $\frac{1}{n-dl}\leq\frac{n-\norm{w}_{\rm crit}+dl}{n-dl}$. In total, we get a bound on the probability of the form $$\left(\frac{n-\norm{w}_{\rm crit}+dl}{n-dl}\right)^d\cdot l^d$$ by a simple union bound. This completes the proof in the case $h=1$.

In the general case $h \geq 2$, we apply the above reasoning to $w^h$ and we need to study the probability that along the trajectory of length $lh$ we return to the starting point after $le$ steps for some $1 \leq e \leq h.$ Now, if we return on some trajectory after $le$ steps for $1 \leq e <h$, the next choice is definitely forced. Hence, we are back to the argument above.
\end{proof}

\begin{corollary}
Let $w\in S_n\ast\mathbb{F}_r$ be regular of length $l$. Let $h \geq 1$ and let $X\subseteq \{1,\dots,n\}$ be a $d$-element subset. Then
$$\mathbb{P}_{\sigma \in S^r_n}(X\subseteq F_{\leq h}(w(\sigma_1,\dots,\sigma_r)))\leq\left(\frac{d(lh)^2}{n-dlh} \right)^d.$$
\end{corollary}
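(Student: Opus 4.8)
The plan is to read the estimate off Lemma~\ref{lem:eberhard}: the only point that needs an argument is that regularity of $w$ forces $\norm{w^h}_{\rm crit}=n$, so that the corresponding term in the lemma's bound disappears.

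Concretely, Lemma~\ref{lem:eberhard} gives
$$\mathbb{P}_{\sigma \in S^r_n}\big(X\subseteq F_{\leq h}(w(\sigma_1,\dots,\sigma_r))\big)\leq\left(\frac{(n-\norm{w^h}_{\rm crit}+dlh)\,lh}{n-dlh}\right)^d,$$
and since the Hamming norm of a permutation is at most $n$, it suffices to show $\norm{w^h}_{\rm crit}=n$: substituting $n-\norm{w^h}_{\rm crit}\leq 0$ into the display leaves exactly $\left(\tfrac{d(lh)^2}{n-dlh}\right)^d$. As $\norm{w^h}_{\rm crit}$ is the minimum of the Hamming norms of the critical constants of $w^h$ (with the convention that it equals $n$ when there are none), this amounts to showing that every critical constant of $w^h$ is fixed point free.

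The substance is thus a purely combinatorial comparison of the critical indices of $w^h$ with those of $w^2$. Writing $w=c_0x_{\iota(1)}^{\varepsilon(1)}c_1\cdots c_{l-1}x_{\iota(l)}^{\varepsilon(l)}c_l$, the word $w^h$ is a concatenation of $h$ copies of $w$, so every critical subword $x^{\varepsilon}cx^{-\varepsilon}$ of $w^h$ either lies inside a single copy, whence $c=c_j$ for some $j\in J_-(w)$, or straddles a seam between two consecutive copies, whence $\iota(l)=\iota(1)$, $\varepsilon(l)=-\varepsilon(1)$ and $c=c_lc_0$; in either case $c$ already occurs as a critical constant of $w^2$, which exhibits both one internal copy of $w$ and one such seam. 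Hence the critical constants of $w^h$ form a subset of those of $w^2$, which are all fixed point free by regularity, and therefore $\norm{w^h}_{\rm crit}=n$. (A minor technical point: cancellations at the seams when $c_lc_0=1$ could a priori merge further constants, but any such cascade already happens inside $w^2$ and only shortens $w^h$, so the comparison and the resulting bound are unaffected; the assumption that $w$ is not conjugate to an element of $S_n$ rules out the extreme case where $w^h$ collapses completely.)

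I expect this bookkeeping — isolating $c_lc_0$ as the only critical constant a power of $w$ can acquire beyond those already present in $w$, and recognizing it as a critical constant of $w^2$ — to be the whole content; once $\norm{w^h}_{\rm crit}=n$ is established, the corollary follows immediately from Lemma~\ref{lem:eberhard}.
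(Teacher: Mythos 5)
Your proposal is correct and matches the paper's (implicit) derivation: the paper states this corollary without proof, the intended argument being exactly that regularity makes every critical constant of $w^h$ — which is either a critical constant of $w$ or the seam constant $c_lc_0$, hence in either case a critical constant of $w^2$ — fixed point free, so that $n-\norm{w^h}_{\rm crit}\leq 0$ and Lemma~\ref{lem:eberhard} reduces to the stated bound. Your parenthetical worry about cancellation at the seams is in fact vacuous under regularity, since a critical seam constant that is trivial would have fixed points.
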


The proof of Proposition \ref{prop:ebjez} follows now from the previous corollary exactly as in the proof of \cite{MR4359476}*{Theorem A.4}. 

\section{Invariant probability measures on $S_n^d$}

In this section we want to discuss the natural generalization of the previous results to a study of the simultaneous behavior of multiple words with constants. In order to explain the basic questions, let's consider the assignment $(x,y) \mapsto (x,y^2xy^{-1})$. We can think of this assignment as a homomorphism $\mathbb F_2 \to \mathbb F_2$ as well as inducing a natural map on $S_n \times S_n \to S_n \times S_n$ by evaluation for each $n \in \mathbb N$. 
We intend to study two basic notions of equidistribution of the associated word values.

\subsection{Sofic approximations of the free group}

Consider $w_1,\dots,w_d \in \mathbb F_r$. Note that the measures $(w_1,\dots,w_d)_*((\mu^{S_n}_{\rm unif})^{\otimes r})$ are invariant under the diagonal $S_n$-action by conjugation. We denote the set of measures $\mu \in {\rm P}(S_n^d)$ that are invariant under the diagonal conjugation action by ${\rm P}(S_n^d)^{S_n}$.

A first natural condition to study for a sequence $(\mu_n)_n$ of probability measures on $S_n^d$ is as follows:

\begin{definition} \label{def:sofic}
We say that a sequence of probability measures $(\mu_n)_n$ with $\mu_n \in {\rm P}(S_n^d)^{S_n}$ is a \emph{sofic approximation of the free group} if for all non-trivial $w \in \mathbb F_d$ the probability of the tail event 
$$\lim_{n \to \infty} \frac{1}{n}f_1(w(\sigma_{1,n},\dots,\sigma_{d,n})) = 0$$
is equal to $1$.
\end{definition}

Since there are only countably many words, the order of quantifiers in the preceding definition is irrelevant.

\begin{proposition}
Suppose $w_1,\dots,w_d \in \mathbb F_r$ and consider
$$\mu_n:=(w_1,\dots,w_d)_*((\mu^{S_n}_{\rm unif})^{\otimes r}).$$
The sequence $(\mu_n)_n$ is a sofic approximation of the free group if and only if $w_1,\dots,w_d$ generate a free subgroup of rank $d.$
\end{proposition}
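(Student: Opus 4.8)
The plan is to prove both implications by relating the word maps $w_i$ to the probabilistic condition in Definition~\ref{def:sofic}. For the easier direction, suppose $w_1,\dots,w_d$ do \emph{not} generate a free subgroup of rank $d$; then there is a non-trivial reduced word $u \in \mathbb F_d$ with $u(w_1,\dots,w_d) = 1$ in $\mathbb F_r$. Hence the word map $v := u(w_1,\dots,w_d) \in \mathbb F_r$ is the trivial word, so $v(\sigma_1,\dots,\sigma_r) = 1$ for every $\sigma \in S_n^r$, giving $f_1(u(\tau_{1,n},\dots,\tau_{d,n})) = n$ identically, where $\tau_{i,n} = w_i(\sigma_{1,n},\dots,\sigma_{r,n})$. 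Thus the tail event in Definition~\ref{def:sofic} fails for this particular non-trivial $u$ (with probability $1$, even), so $(\mu_n)_n$ is not a sofic approximation of the free group.

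For the converse, assume $w_1,\dots,w_d$ generate a free subgroup of rank $d$. Fix a non-trivial $u \in \mathbb F_d$; I must show $f_1(u(\tau_{1,n},\dots,\tau_{d,n})) \to 0$ almost surely, where the $\tau_{i,n}$ are as above. The key observation is that $v := u(w_1,\dots,w_d) \in \mathbb F_r$ is a non-trivial word, precisely because $w_1,\dots,w_d$ freely generate their image, so substituting into $u$ cannot collapse it. Write $l$ for the length of the reduced form of $v$ in $\mathbb F_r$. Now $u(\tau_{1,n},\dots,\tau_{d,n}) = v(\sigma_{1,n},\dots,\sigma_{r,n})$, and the latter is a word value of an ordinary (constant-free) non-trivial word $v$ at a uniformly random tuple in $S_n^r$. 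One applies Proposition~\ref{prop:ebjez} (or the special case recorded in Corollary~\ref{cor:noconstant}), with $h = 1$ and $f = 1$: for $n$ large enough that $l < c$ (which holds eventually since $l$ is a fixed constant not depending on $n$), we get
$$\mathbb P_{\sigma \in S_n^r}\bigl(f_1(v(\sigma_1,\dots,\sigma_r)) \geq 1\bigr) \leq \exp(-c/l^2).$$
This exponential bound is $n$-independent, which is not quite enough for Borel--Cantelli as stated; one instead uses Proposition~\ref{prop:ebjez} with a slowly growing threshold, e.g. $f = f(n) = \log^2 n$ and $h=1$, to obtain $\mathbb P(f_1(v(\sigma)) \geq \log^2 n) \leq \exp(-c \log^2 n / l^2) = n^{-c\log n / l^2}$, which is summable in $n$. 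By Borel--Cantelli, almost surely $f_1(u(\tau_{1,n},\dots,\tau_{d,n})) = f_1(v(\sigma_n)) < \log^2 n$ eventually, hence $f_1(u(\tau_{1,n},\dots,\tau_{d,n})) \to 0$ almost surely (recall $f_i$ is the \emph{normalized} count, so $\log^2 n$ here should be read as $\log^2(n)/n \to 0$; if $f_i$ is unnormalized, divide throughout). Intersecting these full-measure events over the countably many non-trivial $u \in \mathbb F_d$ gives the sofic approximation property.

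The main obstacle is bookkeeping rather than conceptual: one must be careful that the constant $c$ and the length $l$ in Proposition~\ref{prop:ebjez} refer to the word $v = u(w_1,\dots,w_d) \in \mathbb F_r$, whose length depends on $u$ and on the chosen $w_i$ but \emph{not} on $n$, so that the hypothesis $l < c f(n)^{1/2}/h$ is satisfied for all large $n$; and one must choose the threshold $f(n)$ growing slowly enough that $f(n)/n \to 0$ yet fast enough that $\exp(-c f(n)/l^2)$ is summable — any $f(n) = \omega(\log n)$ with $f(n) = o(n)$, such as $f(n) = \log^2 n$, works. The only genuine input specific to the free-generation hypothesis is the elementary fact that $u \mapsto u(w_1,\dots,w_d)$ is injective on $\mathbb F_d$ exactly when $w_1,\dots,w_d$ freely generate a rank-$d$ subgroup, so that non-triviality of $u$ transfers to non-triviality of $v$; everything else is a direct appeal to Proposition~\ref{prop:ebjez} together with Borel--Cantelli.
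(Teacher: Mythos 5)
Your proof is correct and takes essentially the same route as the paper, which only sketches the argument as ``Proposition~\ref{prop:ebjez} for $h=1$, a straightforward Borel--Cantelli argument, and the observation that $v=u(w_1,\dots,w_d)$ is non-trivial for all non-trivial $u$ if and only if $w_1,\dots,w_d$ generate a free subgroup of rank $d$''; your choice of a slowly growing threshold such as $f(n)=\log^2 n$ is exactly the right way to make the Borel--Cantelli step work, given that $f_1$ must be read as normalized. The only blemish is the parenthetical appeal to Corollary~\ref{cor:noconstant}, which bounds a Kantorovich--Rubinstein distance rather than fixed-point counts and is not the statement you need here.
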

\begin{proof}
Note that $w_*(\mu_n) = v_*((\mu^{S_n}_{\rm unif})^{\otimes r})$ for $v=w(w_1,\dots,w_d).$ Hence, the result is again a consequence of Proposition \ref{prop:ebjez} for $h=1$, a straightforward Borel-Cantelli argument and the observation that $v$ is non-trivial for all non-trivial $w$ if and only if the $w_1,\dots,w_d$ generate a free subgroup of rank $d$.
\end{proof}

\subsection{Simultaneous equidistribution}

It is natural to wonder if the distribution of word values in $S_n \times S_n$ approximates the uniform measure in the Kantorovich-Rubinstein metric on ${\rm P}(S_n \times S_n)$ with respect to the product metric of the Hamming metrics.
In order to make this precise, we fix the $\ell^1$-product-metric on $S_n^d$:
$$d((\sigma_1,\dots,\sigma_d),(\tau_1,\dots,\tau_d))= \sum_{i=1}^d d(\sigma_i,\tau_i)$$
and denote the associated Kantorovich-Rubinstein metric on the space of probability measures ${\rm P}(S_n^d)$ also by $d.$

\begin{definition}
We say that a $d$-tuple of words $(w_1,\dots w_d) \in \mathbb F_r^d$ is \emph{simultaneously asymptotically equidistributed}, or \emph{s.a.e.}\ for brevity, if 
$$\lim_{n \to \infty} d\left((w_1,\dots,w_d)_*((\mu^{S_n}_{\rm unif})^{\otimes r}), (\mu^{S_n}_{\rm unif})^{\otimes d} \right) =0,$$
where $d$ denotes the Kantorovich-Rubinstein-metric on the set of probability measures on $S_n^d.$
\end{definition}

We will need the following basic lemma:
\begin{lemma} \label{lem:counting} Let $d \in \mathbb N$. There exists a constant $c>0$, such that for all $n \in \mathbb N$, the following holds:
If $\mu \in {\rm P}(S_n^d)$ satisfies $d(\mu,(\mu^{S_n}_{\rm unif})^{\otimes d})\leq \varepsilon$,
then $${\rm supp}(\mu) \geq c (n!)^{(1- \varepsilon^{1/2})d}.$$
\end{lemma}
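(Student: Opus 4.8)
The plan is to bound below the support of $\mu$ by a covering argument: any measure that is $\varepsilon$-close to $(\mu^{S_n}_{\rm unif})^{\otimes d}$ in the Kantorovich–Rubinstein metric must have support large enough that neighbourhoods of the support points, taken in the $\ell^1$-product of Hamming metrics, cover most of the mass of $(\mu^{S_n}_{\rm unif})^{\otimes d}$. Concretely, fix an optimal coupling $\gamma \in {\rm P}(S_n^d \times S_n^d)$ between $\mu$ and $(\mu^{S_n}_{\rm unif})^{\otimes d}$ with $\int d(x,y)\,d\gamma \leq \varepsilon$. By Markov's inequality, $\gamma\{(x,y) \mid d(x,y) \leq \varepsilon^{1/2}\} \geq 1 - \varepsilon^{1/2}$. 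Projecting to the second coordinate, the set $B := \bigcup_{x \in {\rm supp}(\mu)} \{ y \in S_n^d \mid d(x,y) \leq \varepsilon^{1/2}\}$ has $(\mu^{S_n}_{\rm unif})^{\otimes d}(B) \geq 1 - \varepsilon^{1/2}$, hence $|B| \geq (1-\varepsilon^{1/2})(n!)^d$. So it suffices to count how many $y$ lie within $\ell^1$-Hamming-distance $\varepsilon^{1/2}$ of a fixed point $x$, and divide.

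The second and main step is therefore the ball-size estimate: for fixed $x = (\sigma_1,\dots,\sigma_d) \in S_n^d$ and radius $\rho = \varepsilon^{1/2}$, the number of $y=(\tau_1,\dots,\tau_d)$ with $\sum_i d(\sigma_i,\tau_i) \leq \rho$ is at most $(n!)^{d}\cdot (n!)^{-(1-\rho)d}$ up to a constant, i.e.\ at most roughly $(n!)^{\rho d}$ times a polynomial factor. First reduce to a single coordinate: if $\sum_i d(\sigma_i,\tau_i)\le\rho$ then each $d(\sigma_i,\tau_i)\le\rho$, and the number of tuples is bounded by $\prod_i N(\sigma_i,\rho)$ where $N(\sigma,\rho) := |\{\tau \in S_n \mid d(\sigma,\tau)\le \rho\}|$; and by bi-invariance of the Hamming metric $N(\sigma,\rho)=N({\rm id},\rho)$ is the number of permutations moving at most $\rho n$ points. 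That number is $\sum_{k \le \rho n}\binom{n}{k} D_k$ where $D_k$ is the number of derangements of $k$ points; since $D_k \le k! \le n!/(n-k)!\cdot$ (a factor), one gets $N({\rm id},\rho) \le (\rho n+1)\binom{n}{\rho n}\rho n! \le (n!)/((n-\rho n)!)\cdot\mathrm{poly}(n)$, and by Stirling $(n-\rho n)! \ge (n!)^{1-\rho}\cdot(\text{sub-exponential error})$, giving $N({\rm id},\rho)\le (n!)^{\rho}\cdot(n!)^{o(1)}$. Combining over the $d$ coordinates yields $|B| \le |{\rm supp}(\mu)| \cdot (n!)^{\rho d + o(1)}$, hence $|{\rm supp}(\mu)| \ge (1-\varepsilon^{1/2})(n!)^{(1-\rho)d}\cdot(n!)^{-o(1)} \ge c\,(n!)^{(1-\varepsilon^{1/2})d}$ for a suitable constant $c$ depending only on $d$.

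The step I expect to be the main obstacle is controlling the $(n!)^{o(1)}$ Stirling error in the ball-size bound so that it is genuinely absorbed into the constant $c$ uniformly in $n$: one must check that $(n-\rho n)!\ge (n!)^{1-\rho} n^{-O(n)}$ type inequalities and the binomial/derangement prefactors do not spoil the clean exponent $(1-\varepsilon^{1/2})d$. Since the statement only claims the bound with constant $c = c(d)$ (and for the stated normalization $\varepsilon^{1/2}$ there is slack — one could equally write any $(1-\varepsilon^{1/2}-o(1))d$), the polynomial and sub-exponential factors are harmless: $(n!)^{-o(1)}$ together with $(1-\varepsilon^{1/2})$ can be bounded below by a fixed constant once $n$ is large, and the finitely many small $n$ are handled by shrinking $c$. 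A mild technical point is the degenerate regime $\varepsilon^{1/2}\ge 1$, where the claim is vacuous or trivial and can be dismissed at the outset.
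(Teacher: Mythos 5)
Your argument follows essentially the same route as the paper's. The paper passes to the L\'evy--Prokhorov metric via the standard inequality $\pi(\mu,\nu)\leq d(\mu,\nu)^{1/2}$ --- which is proved precisely by your Markov-on-the-optimal-coupling step --- to obtain $1 \leq |{\rm supp}(\mu)^{\varepsilon^{1/2}}|/(n!)^d + \varepsilon^{1/2}$, and then bounds the Hamming ball in a single coordinate by $n!/((1-\varepsilon^{1/2})n)!$, exactly as you do. So the decomposition into ``neighbourhood of the support covers most of the mass'' plus ``ball-size estimate'' is identical; you merely unwind the L\'evy--Prokhorov comparison by hand.

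The one place where your write-up goes wrong is the step you yourself flag. The factor by which $n!/((1-\rho)n)!$ exceeds $(n!)^{\rho}$ is, by Stirling, of order $(1-\rho)^{-(1-\rho)n}$ up to polynomial corrections, and $-(1-\rho)\ln(1-\rho)$ can be as large as $1/e$; so this factor can grow like $e^{n/e}$. It is indeed $(n!)^{o(1)}$, but $(n!)^{-o(1)}$ of this kind tends to $0$ and therefore \emph{cannot} ``be bounded below by a fixed constant once $n$ is large,'' contrary to your closing claim. What your computation actually proves is ${\rm supp}(\mu)\geq c_{\delta}\,(n!)^{(1-\varepsilon^{1/2}-\delta)d}$ for every fixed $\delta>0$, not the clean exponent $(1-\varepsilon^{1/2})d$. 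You should be aware that the paper's own proof has the same wrinkle: it asserts $n!/((1-\varepsilon^{1/2})n)!\leq c'(n!)^{\varepsilon^{1/2}}$ with a universal constant $c'$, which fails for exactly the reason above. The weaker exponent with the $\delta$-loss is entirely sufficient for every use of the lemma in the paper (ruling out supports of size $n!$ or $(n!)^2$), so the defect is cosmetic, but if you want the statement as literally printed you would need either to restate it with the $\delta$ or to absorb the $e^{O(n)}$ error by weakening $\varepsilon^{1/2}$ to some slightly larger function of $\varepsilon$.
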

\begin{proof}
Another natural metric on ${\rm P}(S_n^d)$ is the Lévy–Prokhorov metric
$$\pi(\mu,\nu):= \inf\{\varepsilon>0 \mid \mu(A) \leq \nu(A^{\varepsilon}) + \varepsilon, \nu(A) \leq \mu(A^{\varepsilon}) + \varepsilon\}.$$
It is a basic fact that $\pi(\mu,\nu) \leq d(\mu,\nu)^{1/2}.$ Now, $d(\mu,(\mu^{S_n}_{\rm unif})^{\otimes d})) \leq \varepsilon$ implies that
$\pi(\mu,(\mu^{S_n}_{\rm unif})^{\otimes d})) \leq \varepsilon^{1/2}$ and hence
$$1 =\mu({\rm supp}(\mu)) \leq \frac{|{\rm supp}(\mu)^{\varepsilon^{1/2}}|}{(n!)^d} + \varepsilon^{1/2}.$$

It remains to bound the size of Hamming balls in $S_n$.
Let $B_\delta(\mathrm{id}) := \{\sigma \in S_n : d(\sigma,\mathrm{id}) \le \delta\}$.
Then every $\sigma \in B_\delta(\mathrm{id})$ moves at most $\delta n$ points, so
\[
|B_\delta(\mathrm{id})|
\le \sum_{k \le \delta n} \binom{n}{k} k!
\le \sum_{k \le \delta n} n^k
\le (\delta n+1) n^{\delta n}.
\]
Using Stirling's formula, this implies
\[
|B_\delta(\mathrm{id})| \le c' (n!)^{\delta}
\]
for some universal constant $c'>0$ and all sufficiently small $\delta$.
Since all Hamming balls have the same size, the claim follows.
Thus, we obtain $|{\rm supp}(\mu)^{\varepsilon^{1/2}}| \leq c'|{\rm supp}(\mu)| (n!)^{d\varepsilon^{1/2}}$ and this implies the claim.
\end{proof}

\begin{example}
If a sequence $(\sigma_n,\tau_n)_n$ with $ (\sigma_n,\tau_n)\in S_n^2$ is a (deterministic) sofic approximation of the free group, then the sequence of measures $(\mu_n)_n$ with $\mu_n \in {\rm P}(S_n^2)$, which is giving a random conjugate of this sequence, is a sofic approximation of the free group in the sense of Definition \ref{def:sofic}. However, the measure $\mu_n$ is supported on at most $n!$ points and thus cannot be close to the uniform distribution on $S_n^2$ by the previous lemma.
\end{example}

One may wonder about sufficient conditions on a $d$-tuple $(w_1,\dots,w_d) \in \mathbb F_r$ to ensure that the word values are s.a.e., noting that Lemma \ref{lem:counting} easily implies that $d \leq r$ is necessary.
We do not know the precise conditions in case $d=r=2$, but using Lemma \ref{lem:counting}, it follows that simultaneous asymptotic equidistribution can fail for $d=r=3$ even if $w_1,w_2,w_3$ generate a free subgroup of rank $3$. Indeed, consider, $w_1,w_2,w_3 \in \mathbb F_3$, with $\langle w_1,w_2,w_3\rangle = \mathbb F_3 \leq \mathbb F_2 \leq \mathbb F_3.$ In this case the word image is of cardinality at most $(n!)^2$, which (again using Lemma \ref{lem:counting}) is not enough for the image measure $(w_1,w_2,w_3)_*((\mu^{S_n}_{\rm unif})^{\otimes 3})$ to be Kantorovich-Rubinstein close to the equidistribution on $S_n^{3}$. The simplest instance of this phenomenon is the triple $(w_1,w_2,w_3)=(x_1^2,x_2^2,x_1x_2)$, and as far as we can tell, it seems to be the only obstruction.

\begin{conjecture}
Let $d \in \mathbb N$ and $w_1,\dots,w_d \in \mathbb F_r$. If $\langle w_1,\dots,w_d\rangle$ generate a free subgroup of rank $d$ and this group is not contained in a subgroup of smaller rank, then $(w_1,\dots,w_d) \in \mathbb F_r^d$ is simultaneously asymptotically equidistributed. 
\end{conjecture}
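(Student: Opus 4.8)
We outline a possible approach and indicate the point at which it currently breaks down. Two extreme cases orient the plan. If $d=r$ and $\langle w_1,\dots,w_d\rangle=\mathbb F_r$, then $(w_1,\dots,w_d)\colon S_n^r\to S_n^r$ is induced by an automorphism of $\mathbb F_r$, hence is a bijection, and the word measure is literally $(\mu^{S_n}_{\rm unif})^{\otimes d}$; if $d=1$, the statement is the $\mathbb F_r$-case of Corollary \ref{cor:noconstant}. The non-degeneracy hypothesis is genuinely needed: if $F\coloneqq\langle w_1,\dots,w_d\rangle$ is contained in a subgroup $K\le\mathbb F_r$ of rank $k<d$, then writing the $w_i$ in terms of a basis of $K$ shows the word measure is supported on at most $(n!)^k$ points, so by Lemma \ref{lem:counting} it stays a fixed positive Kantorovich--Rubinstein distance away from $(\mu^{S_n}_{\rm unif})^{\otimes d}$; the conjecture asserts that this is the only obstruction. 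Write $\nu_n\coloneqq(w_1,\dots,w_d)_*((\mu^{S_n}_{\rm unif})^{\otimes r})\in{\rm P}(S_n^d)$ and recall that $\nu_n$ is invariant under the diagonal conjugation action of $S_n$.

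The natural plan is an induction on $d$. Let $\nu_n'\coloneqq(w_1,\dots,w_{d-1})_*((\mu^{S_n}_{\rm unif})^{\otimes r})$ be the marginal of $\nu_n$ on the first $d-1$ coordinates. Since the metric on $S_n^d$ is an $\ell^1$-product, the triangle inequality gives \[d(\nu_n,(\mu^{S_n}_{\rm unif})^{\otimes d})\le d(\nu_n,\nu_n'\otimes\mu^{S_n}_{\rm unif})+d(\nu_n',(\mu^{S_n}_{\rm unif})^{\otimes(d-1)}).\] The tuple $(w_1,\dots,w_{d-1})$ is again non-degenerate -- it is free of rank $d-1$, and were it contained in a subgroup of rank $k\le d-2$ then $F$ would be contained in one of rank $k+1\le d-1$ -- so the second term is $o(1)$ by the inductive hypothesis, the base case $d=1$ being Corollary \ref{cor:noconstant}. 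Coupling $\nu_n$ and $\nu_n'\otimes\mu^{S_n}_{\rm unif}$ by drawing the first $d-1$ coordinates $(\tau_1,\dots,\tau_{d-1})\sim\nu_n'$ in common and then coupling the last coordinate optimally, the first term is bounded by the $\nu_n'$-average over $(\tau_1,\dots,\tau_{d-1})$ of the Kantorovich--Rubinstein distance between $\mu^{S_n}_{\rm unif}$ and the law of $w_d(\sigma_1,\dots,\sigma_r)$ conditioned on $w_i(\sigma_1,\dots,\sigma_r)=\tau_i$ for all $i<d$, where $(\sigma_1,\dots,\sigma_r)$ is uniform in $S_n^r$. Thus the whole problem reduces to a \emph{relative equidistribution} statement: for $\nu_n'$-typical $(\tau_1,\dots,\tau_{d-1})$, this conditional law of $w_d$ is $o(1)$-close to $\mu^{S_n}_{\rm unif}$.

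This relative statement is where both the evidence for the conjecture and the difficulty reside. When $L\coloneqq\langle w_1,\dots,w_{d-1}\rangle$ happens to be a free factor of $\mathbb F_r$, say $\mathbb F_r=L\ast M$, the conditioning is harmless: $\mathrm{Hom}(\mathbb F_r,S_n)$ splits as $\mathrm{Hom}(L,S_n)\times\mathrm{Hom}(M,S_n)$, the fibre of $\sigma\mapsto(w_1(\sigma),\dots,w_{d-1}(\sigma))$ over a realizable $(\tau_i)$ is a single point of $\mathrm{Hom}(L,S_n)$ times all of $\mathrm{Hom}(M,S_n)$, and in the $\mathrm{Hom}(M,S_n)$-coordinates $w_d(\sigma)$ becomes the value of a word with constants, the constants being words in $\tau_1,\dots,\tau_{d-1}$; this word is non-singular (since $w_d\notin L$), and the relative statement therefore reduces precisely to the circle of equidistribution questions for words with constants of Section \ref{sec:conj} -- unconditional via Theorem \ref{thm:main} when the word is regular, and conjectural in general. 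For a general non-degenerate tuple, however, $L$ need not be a free factor of $\mathbb F_r$; the fibres of $\sigma\mapsto(w_1(\sigma),\dots,w_{d-1}(\sigma))$ are then inhomogeneous, Theorem \ref{thm:main} does not apply directly, and one has to prove instead a refinement of Proposition \ref{prop:ebjez} \emph{relative to} the sub-tuple $(w_1,\dots,w_{d-1})$: rerun the inductive trajectory construction of Lemma \ref{lem:eberhard} while conditioning on the values of the earlier words, and bound the probability of a forced trajectory collision along a fibre. Organized by means of the Stallings core graph of $F$, such a collision corresponds to a partial word lifting through a proper subgroup $H\le F$, with probabilistic weight governed by the rank of $H$ inside $\mathbb F_r$, and the hypothesis that $F$ is not contained in a subgroup of smaller rank should be exactly what forces every such collision to be costly. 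Proving this relative estimate in full generality -- extending the techniques of \cite{MR4359476} and of Lemma \ref{lem:eberhard} -- is the main obstacle and the reason the statement remains conjectural; granted it, the deduction of the relative equidistribution from the relative short-cycle estimate is routine, exactly as Theorem \ref{thm:main} is deduced from Proposition \ref{prop:ebjez}.
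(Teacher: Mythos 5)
You were asked about a statement that the paper itself records only as a conjecture: the authors offer no proof, and the surrounding discussion (the counterexample $(x_1^2,x_2^2,x_1x_2)$, Proposition \ref{prop:exam}, and the theorem conditional on Conjecture \ref{conj:conj}) makes clear they regard it as open. Your proposal is likewise not a proof, and you say so; judged as such, it cannot be accepted as establishing the statement. The concrete gap is the one you name yourself: the induction on $d$ via the $\ell^1$ triangle inequality reduces everything to a \emph{relative} equidistribution statement for the conditional law of $w_d$ given $(w_1,\dots,w_{d-1})$, and outside the free-factor case this requires a version of Proposition \ref{prop:ebjez} conditioned on the values of the earlier words, which neither you nor the paper proves. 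The preliminary observations (bijectivity when $d=r$ and the $w_i$ generate $\mathbb F_r$, the base case $d=1$ via Corollary \ref{cor:noconstant}, the heredity of non-degeneracy under dropping $w_d$, and the necessity of non-degeneracy via Lemma \ref{lem:counting}) are all correct and consistent with the paper.

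One step in your outline is wrong as stated and worth flagging, because it understates the difficulty even of the free-factor case. You claim that when $\mathbb F_r=L\ast M$ with $L=\langle w_1,\dots,w_{d-1}\rangle$, the induced word with constants representing $w_d$ on a fibre is non-singular ``since $w_d\notin L$.'' This implication fails: for $w_d=yw_1y^{-1}$ with $y\in M$ one has $w_d\notin L$, yet applying $\epsilon$ (killing the constants) yields $yy^{-1}=1$, so the word with constants $y\tau_1y^{-1}$ is singular. This is exactly the pair $(x,yxy^{-1})$ of Proposition \ref{prop:exam}, which the paper must handle by a separate argument (a random conjugate of an $n$-cycle), and it is why the paper's conditional theorem first treats $w$ not conjugate into $\langle x\rangle$ and then climbs an inductive chain through conjugates. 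So even granting your relative short-cycle estimate, the free-factor branch of your reduction needs this additional case analysis, and in the cases the paper can actually analyze it already rests on the unproven Conjecture \ref{conj:conj}. In short: the plan is a reasonable research programme aligned with the paper's partial results, but the statement remains a conjecture and your text does not close it.
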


In case $r=d$, this conjecture fits well with recent developments in the study of restriction maps in $\ell^2$-cohomology of free groups, see \cite{jaikinrigid}. Indeed, there is a known connection between $\ell^2$-invariants, notions of entropy and the asymptotics of the number of local models that might play a role here, \cite{consh}, however, we do not want to explore this at this stage.

Note that the set of $d$-tuples of words in $\mathbb F_d$, for which the conjecture holds can be identified with a particular subset of ${\rm hom}(\mathbb F_d, \mathbb F_d)$; clearly, this subset contains all automorphisms and is closed under composition. So in order to get started, we provide some examples which are non-surjective. We stick to the case $d=2$, even though the results generalize in an obvious way to the case $d\geq3$.

\begin{proposition} \label{prop:exam}
For $d=2$, the pairs $(x,y^k)$ for $k \neq 0$ and $(x,yxy^{-1})$ are simultaneously asymptotically equidistributed.
\end{proposition}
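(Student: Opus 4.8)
The plan is to handle the two families separately and, in each case, to write down an explicit coupling realizing (up to a vanishing error) the Kantorovich--Rubinstein distance between $(w_1,w_2)_*((\mu^{S_n}_{\rm unif})^{\otimes 2})$ and $(\mu^{S_n}_{\rm unif})^{\otimes 2}$. One preliminary observation is used in both cases: for the $\ell^1$-product metric on $S_n^2$ one has $d(\mu_1\otimes\mu_2,\nu_1\otimes\nu_2)\le d(\mu_1,\nu_1)+d(\mu_2,\nu_2)$, because the product of optimal couplings is a coupling of the products whose cost is the sum of the two costs.

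\emph{The pair $(x,y^k)$.} Since $(\sigma,\tau)$ is uniform on $S_n^2$, the coordinates $\sigma$ and $\tau^k$ of the image are independent, so
\[
(x,y^k)_*((\mu^{S_n}_{\rm unif})^{\otimes 2}) = \mu^{S_n}_{\rm unif}\otimes\mu^{S_n}_{x_1^k}.
\]
The word $x_1^k\in\mathbb F_1$ is non-trivial of length $|k|$ for $k\neq0$, so Corollary \ref{cor:noconstant} gives $d(\mu^{S_n}_{x_1^k},\mu^{S_n}_{\rm unif})\le c_\varepsilon k^2 n^{-1/3+\varepsilon}$, and the product estimate above yields $d\big((x,y^k)_*((\mu^{S_n}_{\rm unif})^{\otimes 2}),(\mu^{S_n}_{\rm unif})^{\otimes 2}\big)\le c_\varepsilon k^2 n^{-1/3+\varepsilon}\to0$.

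\emph{The pair $(x,yxy^{-1})$.} Here I would condition on the first coordinate. Given $\sigma\in S_n$, the conditional law of $yxy^{-1}$ over uniform $y$ is the equidistribution $\nu_\sigma$ on the conjugacy class $[\sigma]$, since $y\mapsto y\sigma y^{-1}$ pushes $\mu^{S_n}_{\rm unif}$ to the equidistribution on $[\sigma]$. If $\sigma$ has $c(\sigma)$ cycles, then Lemma \ref{lem:basic} and Lemma \ref{lem:ncycle}, combined via the triangle inequality through the equidistribution on $n$-cycles, give $d(\nu_\sigma,\mu^{S_n}_{\rm unif})\le (c(\sigma)+\ln n+1)/n$. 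Now form a coupling $\gamma$ on $S_n^2\times S_n^2$ as follows: sample $\sigma\sim\mu^{S_n}_{\rm unif}$; sample $(\tau,\rho)$ from an optimal coupling $\lambda_\sigma$ of $\nu_\sigma$ and $\mu^{S_n}_{\rm unif}$; output $((\sigma,\tau),(\sigma,\rho))$. Its first marginal is $(x,yxy^{-1})_*((\mu^{S_n}_{\rm unif})^{\otimes 2})$, and its second marginal is $(\mu^{S_n}_{\rm unif})^{\otimes 2}$, because the $\mu^{S_n}_{\rm unif}$-marginal of $\lambda_\sigma$ does not depend on $\sigma$, so $\rho$ is uniform and independent of $\sigma$. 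The cost of $\gamma$ is $\mathbb{E}[\,d(\sigma,\sigma)+d(\tau,\rho)\,]=\mathbb{E}_\sigma[\,d(\nu_\sigma,\mu^{S_n}_{\rm unif})\,]\le (\mathbb{E}[c(\sigma)]+\ln n+1)/n$, and the expected number of cycles of a uniform permutation is $1+\tfrac12+\cdots+\tfrac1n\le\ln n+1$. Hence $d\big((x,yxy^{-1})_*((\mu^{S_n}_{\rm unif})^{\otimes 2}),(\mu^{S_n}_{\rm unif})^{\otimes 2}\big)\le (2\ln n+2)/n\to0$.

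\emph{The main obstacle.} Given Corollary \ref{cor:noconstant} and the lemmas of Section \ref{sec:conj}, neither case is technically difficult; the only place requiring care is the second one, where one must resist symmetrizing and instead observe that conjugation by a uniform random element already spreads $x$ over its whole conjugacy class — which is itself $O(\log n/n)$-close to the equidistribution — while leaving the first coordinate untouched, so that the coupling can keep it fixed. The sole quantitative input is that a random permutation has logarithmically many cycles on average, which is exactly the sort of error the Kantorovich--Rubinstein metric absorbs.
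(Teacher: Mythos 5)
Your proof is correct. For the pair $(x,yxy^{-1})$ you take essentially the paper's route --- the key point in both arguments is that conjugation by a uniform element pushes the point mass at $\sigma$ to the equidistribution on the conjugacy class $[\sigma]$, which by Lemma \ref{lem:basic} and Lemma \ref{lem:ncycle} is $O(\log n/n)$-close to uniform on average over $\sigma$ --- but you make the coupling explicit and condition on an arbitrary $\sigma$ rather than first replacing $x$ by a random $n$-cycle; this makes transparent why the second marginal is uniform \emph{and independent of the first coordinate}, a point the paper's terser formulation leaves implicit. For the pair $(x,y^k)$ your argument genuinely differs: you invoke Corollary \ref{cor:noconstant} (hence Theorem \ref{thm:main} and the machinery of Proposition \ref{prop:ebjez}) for the word $x_1^k$ of length $|k|$, whereas the paper stays elementary, passing to a uniform $n$-cycle via Lemma \ref{lem:ncycle} and using that the $k$-th power map is a bijection on the set of $n$-cycles when $\gcd(k,n)=1$ (with a small adjustment of $n$ otherwise). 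The paper's route yields the sharper rate $O(k\log n/n)$ against your $O(k^2 n^{-1/3+\varepsilon})$ and is self-contained within Section \ref{sec:conj}; yours is shorter, avoids the case distinction on $\gcd(k,n)$, and involves no circularity since Proposition \ref{prop:exam} plays no role in the proof of Theorem \ref{thm:main}. Your preliminary observation that the Kantorovich--Rubinstein distance is subadditive under products for the $\ell^1$-product metric is correct and is also used implicitly by the paper.
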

\begin{proof}
We first treat the pair $(x,y^k)$ with $k\neq 0$.
Let $\mu_n$ be the distribution of $(x,y^k)$ under $(\mu_{S_n}^{\mathrm{unif}})^{\otimes 2}$.
By Lemma \ref{lem:ncycle}, the uniform measure on $S_n$ is at Kantorovich--Rubinstein
distance at most $(\log n +1)/n$ from the uniform measure on the set $\mathcal{C}_n$
of $n$-cycles. Hence, replacing the law of $y$ by the uniform distribution
on $\mathcal{C}_n$ changes $\mu_n$ by at most $O((\log n)/n)$ in
Kantorovich--Rubinstein distance.
Assume first that $(k,n)=1$. Then every $n$-cycle $\sigma$ has a unique $k$-th
root which is again an $n$-cycle. Hence the map
\[
\mathcal{C}_n \to \mathcal{C}_n,\quad \tau \mapsto \tau^k
\]
is a bijection. It follows that $y^k$ is uniformly distributed on $\mathcal{C}_n$. Applying Lemma \ref{lem:ncycle} again shows that this
distribution of $(x,y^k)$ is $o(1)$-close to $(\mu_{S_n}^{\mathrm{unif}})^{\otimes 2}$.

In the other case, if $(k,n) \neq 1$, choose $m \in \{n-1,\dots,n-k\}$ such that $(k,m)=1$. 
Restricting to the set of permutations $\mathcal{C}_{m}$ consisting of an $m$-cycle and $n-m$ fixed points, we reduce to the coprime case. 
On $\mathcal{C}_{m}$, the map $\tau \mapsto \tau^k$ induces a bijection. This proves the claim for $(x,y^k)$ as before.

Now consider $(x,yxy^{-1})$.
Replacing again $x$ by a uniform $n$-cycle up to $o(1)$-error,
we may assume that $x$ is uniformly distributed on $\mathcal{C}_n$.
For fixed $x=\sigma$, the random variable $y\sigma y^{-1}$ is uniformly
distributed on $\mathcal{C}_n$ and independent of $\sigma$.
Hence $(x,yxy^{-1})$ is asymptotically distributed like two independent
uniform $n$-cycles, and the claim follows as above.
\end{proof}

\begin{remark} \label{rem:consequences}
Composing with automorphisms, we obtain that pairs like $(x,[x,y])$ and $(x,yxy)$ are s.a.e., moreover, we also see that if $(x,w(x,y))$ s.a.e., then also $(x,w(x,y)xw(x,y)^{-1}).$ The easiest example which we cannot resolve is $(x,y^2xy^{-1}).$ However, we want to record the following result:
\end{remark}

\begin{theorem}\label{thm:conj-1-implies-sae}
If Conjecture \ref{conj:conj} holds, then the pair $(x,w(x,y))$ is s.a.e.\ for every $w \in \mathbb F_2 \setminus \langle x \rangle$.
\end{theorem}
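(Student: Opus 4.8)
The plan is to establish the quantitative statement $d\bigl((x,w)_*(\mu^{S_n}_{\rm unif})^{\otimes 2},(\mu^{S_n}_{\rm unif})^{\otimes 2}\bigr)\to 0$ as $n\to\infty$. First, combining Theorem~\ref{thm:main} with Conjecture~\ref{conj:conj} and the triangle inequality yields the following consequence of the conjecture, which I abbreviate by $(\ast)$: for every sequence $v_n\in S_n\ast\mathbb F_1$ of regular words with constants of uniformly bounded length one has $d(\mu^{S_n}_{v_n},\mu^{S_n}_{\rm unif})\to 0$. Next I reduce to a one-variable statement. By Lemma~\ref{lem:ncycle} there is, for each $n$, a coupling of $\mu^{S_n}_{\rm unif}$ with the equidistribution $\mu_{\rm cyc}$ on the set $C$ of $n$-cycles of transport cost at most $(\ln n+1)/n$. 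Since the map $(\sigma,\tau)\mapsto w(\sigma,\tau)$ is $|w|$-Lipschitz in $\sigma$ for the (bi-invariant) Hamming metric, coupling the first coordinate this way and keeping $\tau$ fixed gives $d\bigl((x,w)_*(\mu^{S_n}_{\rm unif})^{\otimes 2},\frac1{|C|}\sum_{c\in C}\delta_c\otimes\mu^{S_n}_{w(c,\cdot)}\bigr)\leq(1+|w|)(\ln n+1)/n$, where $w(c,\cdot)\in S_n\ast\mathbb F_1$ denotes the word with constants obtained from $w$ by substituting $c$ for $x$. As the word map is conjugation-equivariant, $w(g\sigma g^{-1},g\tau g^{-1})=g\,w(\sigma,\tau)\,g^{-1}$, the measures $\mu^{S_n}_{w(c,\cdot)}$ ($c\in C$) are mutually conjugate, while $d$ and $\mu^{S_n}_{\rm unif}$ are conjugation-invariant; averaging over $C$ and invoking Lemma~\ref{lem:ncycle} once more (together with the elementary behaviour of the Kantorovich--Rubinstein metric on products and finite mixtures) reduces the theorem to the claim $(\dagger)$ that $d(\mu^{S_n}_{w(c,\cdot)},\mu^{S_n}_{\rm unif})\to 0$ for one, hence every, $n$-cycle $c$.

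I would prove $(\dagger)$ for every $w\in\mathbb F_2\setminus\langle x\rangle$ by strong induction on the reduced length $|w|$. Fix an $n$-cycle $c$. The key combinatorial fact is that once $n$ exceeds a bound depending only on $|w|$, reduction and cyclic reduction of $w(c,y)$ inside $S_n\ast\mathbb F_1$ mirror those of $w$ inside $\mathbb F_2$: each constant appearing in $w(c,y)$ is a power $c^a$ with $0<|a|\leq|w|$, hence non-trivial, so no extra cancellation occurs, while the $y$-syllables behave freely in both groups. From this one reads off, for $n$ large: $(\mathrm{i})$ $w(c,y)$ has length equal to the number of occurrences of $y^{\pm1}$ in $w$, in particular bounded and $\geq 1$; $(\mathrm{ii})$ every critical constant of $w(c,y)^2$ is a power $c^b$ with $0<|b|<n$, hence fixed point free; $(\mathrm{iii})$ $w(c,y)$ is conjugate to an element of $S_n$ if and only if $w$ is conjugate in $\mathbb F_2$ to a power of $x$. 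If $w$ is not conjugate in $\mathbb F_2$ to a power of $x$ — which includes the base case $|w|=1$, where $w=y^{\pm1}$ — then by $(\mathrm{i})$--$(\mathrm{iii})$ the word $w(c,y)$ is, for $n$ large, regular of bounded length $\geq 1$, and $(\dagger)$ follows from $(\ast)$ applied to the sequence $v_n:=w(c_n,y)$ (padding the finitely many small $n$ by arbitrary regular words), together with the conjugation-invariance noted above.

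Otherwise $w$ is conjugate in $\mathbb F_2$ to $x^m$ with $m\neq 0$ (since $w\neq 1$); write $w=ux^mu^{-1}$ and, absorbing any terminal power of $x$ of $u$ into the central factor, assume $u$ does not end in $x^{\pm1}$. Then $w=ux^mu^{-1}$ is reduced, $u\notin\langle x\rangle$, and $|u|<|w|$. The substitution homomorphism gives $w(c,\tau)=u(c,\tau)\,c^m\,u(c,\tau)^{-1}$, so $\mu^{S_n}_{w(c,\cdot)}=\Phi_*\mu^{S_n}_{u(c,\cdot)}$ with $\Phi(\rho):=\rho c^m\rho^{-1}$, which is $2$-Lipschitz for the Hamming metric. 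By the induction hypothesis $d(\mu^{S_n}_{u(c,\cdot)},\mu^{S_n}_{\rm unif})\to 0$, hence $d(\Phi_*\mu^{S_n}_{u(c,\cdot)},\Phi_*\mu^{S_n}_{\rm unif})\to 0$; and $\Phi_*\mu^{S_n}_{\rm unif}$ is the equidistribution on the conjugacy class of $c^m$, which has $\gcd(m,n)\leq|m|$ cycles and so is $(|m|+\ln n+1)/n$-close to $\mu^{S_n}_{\rm unif}$ by Lemmas~\ref{lem:basic} and~\ref{lem:ncycle}. Combining these yields $(\dagger)$ and closes the induction, proving the theorem.

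The step that requires genuine care is the mirroring fact $(\mathrm{i})$--$(\mathrm{iii})$: an elementary but slightly fussy comparison of reduced-word combinatorics in the free product $S_n\ast\mathbb F_1$ with those in $\mathbb F_2$, whose substance is just that for $n$ large relative to $|w|$ the constants $c^a$ are never trivial and produce no unexpected coincidences. Everything else — the Lipschitz estimates for word maps and for $\Phi$, and the behaviour of the Kantorovich--Rubinstein metric under products, finite mixtures and Lipschitz push-forwards — is routine. There is no further conceptual obstacle: by $(\mathrm{iii})$ the only reason $w(c,y)$ can fail to be regular for large $n$ is that $w$ is conjugate to a power of $x$, and in precisely that case the induction simply strips off the strictly shorter conjugator $u$.
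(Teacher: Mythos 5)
Your proposal is correct and follows the same skeleton as the paper's proof: reduce the first coordinate to an $n$-cycle via Lemma \ref{lem:ncycle}, observe that for $w$ not conjugate into $\langle x\rangle$ the specialization $w(c,y)\in S_n\ast\mathbb F_1$ is a regular word with constants of bounded length, and invoke Theorem \ref{thm:main} together with Conjecture \ref{conj:conj}; then handle $w$ conjugate to a power of $x$ by peeling off the conjugator. The one place where you genuinely diverge is this last case: the paper builds a chain $w_i=w_{i-1}x^{k_i}w_{i-1}^{-1}$ and closes the induction by composing s.a.e.\ endomorphisms via Remark \ref{rem:consequences} and Proposition \ref{prop:exam}, whereas you write $w=ux^mu^{-1}$ once and push the (inductively equidistributed) law of $u(c,\cdot)$ forward along the $2$-Lipschitz map $\rho\mapsto\rho c^m\rho^{-1}$, identifying the pushforward of $\mu^{S_n}_{\rm unif}$ as the uniform measure on the conjugacy class of $c^m$ and controlling it by Lemma \ref{lem:basic}. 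Your route is more self-contained and quantitative (it avoids the implicit Lipschitz-stability argument needed to justify that composition preserves the s.a.e.\ property, and it covers arbitrary exponents $k_i$ without appeal to $(x,y^k)$), and you also spell out the coupling and Lipschitz estimates for the joint distribution that the paper leaves implicit. One small imprecision: your claim (ii) that every critical constant of $w(c,y)^2$ is a nontrivial power of $c$ should be read after reducing $w(c,y)^2$ (e.g.\ for $w=yxyxy^{-1}$ the wrap-around constant is trivial before reduction); since the paper's convention is that all words with constants are reduced, this is harmless, and after reduction the critical constants are indeed $c^b$ with $0<|b|\leq 2|w|<n$, hence fixed point free.
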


We will need the following group theoretic lemma.

\begin{lemma}
Let $w \in \mathbb F_2 \setminus \langle x \rangle$. Then there exist $l \ge 0$,
integers $k_1,\dots,k_l \in \mathbb{Z}\setminus\{0\}$, and words
$w_0,\dots,w_l \in \mathbb F_2$ such that
\[
w_i = w_{i-1} x^{k_i} w_{i-1}^{-1} \quad (1 \le i \le l), \qquad w_l = w,
\]
and $w_0$ is not conjugate to a power of $x$.
\end{lemma}

\begin{proof}
Write $w$ in reduced form. If $w$ is not conjugate into $\langle x \rangle$,
we can put $l=0$ and $w_0=w$. Otherwise, $w=v x^k v^{-1}$ for some $v \in \mathbb F_2$ and $k \in \mathbb{Z} \setminus \{0\}$. Moreover, we may assume that $v$ is not ending with $x^{\pm 1}$, otherwise we could shorten the word $v$. Thus the product is already reduced and the length of $v$ is strictly smaller than the length of $w$. Thus, repeating this process finitely many times must terminate and we get a word $w_0$ not conjugate into $\langle x \rangle$ and a chain of words $w_0,w_1,\dots,w_l=w$ with the desired properties.
\end{proof}

\begin{proof}[Proof of Theorem \ref{thm:conj-1-implies-sae}]
Let's prove the claim first in case $w$ is not conjugate to an element of $\langle x \rangle$. We may assume that $x$ takes its value in an $n$-cycle $\sigma \in S_n$. Then, the second variable is distributed as the word values of the word with constants $w(\sigma,y)$. By assumption on $w$, this word is regular, so, assuming Conjecture \ref{conj:conj}, we conclude that the values of $w(\sigma,y)$ are asymptotically equidistributed by Theorem \ref{thm:main}. This proves the claim.

In general, by the preceding lemma, there exists $l\geq 1$ and a chain $w_0,w_1, w_2,\dots, w_l$ of elements of $\mathbb F_2$, numbers $k_1,\dots,k_l \in \mathbb Z \setminus \{0\}$
with $w_i = w_{i-1} x^{k_i} w^{-1}_{i-1}$, for $1 \leq i \leq l$,
$w_l=w$ and such that $w_0$ is not conjugate to a power of $x$ anymore. Thus, the preceding argument applies to $w_0$. The final claim is proven by induction on the index $i$ in the chain above and Remark \ref{rem:consequences}.
\end{proof}

\begin{remark}
In view of the preceding theorem, it would be interesting to know which pairs $(w_1,w_2)$ can be generated by composition of pairs $(x,w(x,y))$ for $w \not \in \langle x \rangle$ and automorphisms of $\mathbb F_2.$ Sean Eberhard pointed out to us in a discussion on MathOverflow\footnote{\url{https://mathoverflow.net/questions/497790/injective-endomorphisms-of-the-free-group-f-2}} that the endomorphism $(x,y)\mapsto ([[x,y],x],[[x,y],y])$ cannot be generated in this way.
\end{remark}

\begin{remark}
We did an extensive computer search for approximate solution to the word equation $\mu = x^2 \sigma x^{-1}$, the first case for which we cannot resolve Conjecture \ref{conj:conj} or even the potentially more basic question whether the word image is $\varepsilon$-dense for $n$ large enough. For random $\mu,\sigma \in S_{50}$, using simulated annealing, we were consistently able to produce consistently permutations $x \in S_{50}$ such $d(\mu,x^2\sigma x^{-1}) \leq 8$. While this is encouraging, the overall evidence supporting Conjecture~\ref{conj:conj} or some extension of Theorem \ref{thm:dense} remains limited.
\end{remark}

\section*{Acknowledgments}

A.T.\ thanks Jan-Christoph Schlage-Puchta for an interesting discussion on the topic. The authors acknowledge funding by the Deutsche Forschungsgemeinschaft (SPP 2026 ``Geometry at infinity'') and thank the colleagues from the SPP for reviewing the article before publication.

\begin{bibdiv}
\begin{biblist}

\bib{MR3034482}{article}{
   author={Bandman, Tatiana},
   author={Kunyavski\u{\i}, Boris},
   title={Criteria for equidistribution of solutions of word equations on
   $\rm SL(2)$},
   journal={J. Algebra},
   volume={382},
   date={2013},
   pages={282--302},
}

\bib{bodschneiderthom}{article}{
      title={Mixed identities for oligomorphic automorphism groups},    author={Manuel Bodirsky},
      author={Jakob Schneider},
      author={Andreas Thom},
      year={2024},
      eprint={2401.09205},
      archivePrefix={arXiv},
      primaryClass={math.GR}
}

\bib{consh}{article}{
   author={Connes, Alain},
   author={Shlyakhtenko, Dimitri},
   title={$L^2$-homology for von Neumann algebras},
   journal={J. Reine Angew. Math.},
   volume={586},
   date={2005},
   pages={125--168},
}

\bib{MR4359476}{article}{
   author={Eberhard, Sean},
   author={Jezernik, Urban},
   title={Babai's conjecture for high-rank classical groups with random
   generators},
   journal={Invent. Math.},
   volume={227},
   date={2022},
   number={1},
   pages={149--210},
}

\bib{MR4138706}{article}{
   author={Hanany, Liam},
   author={Puder, Doron},
   title={Word measures on symmetric groups},
   journal={S\'{e}m. Lothar. Combin.},
   volume={84B},
   date={2020},
   pages={Art. 79, 12},
}

\bib{jaikinrigid}{article}{
      title={Free groups are $L^2$-subgroup rigid},    
      author={Andrei Jaikin-Zapirain},
      year={2024},
      eprint={2403.09515},
      archivePrefix={arXiv},
      primaryClass={math.GR}
}

\bib{MR3604379}{article}{
   author={Klyachko, Anton},
   author={Thom, Andreas},
   title={New topological methods to solve equations over groups},
   journal={Algebr. Geom. Topol.},
   volume={17},
   date={2017},
   number={1},
   pages={331--353},
}

\bib{MR2041227}{article}{
   author={Larsen, Michael},
   title={Word maps have large image},
   journal={Israel J. Math.},
   volume={139},
   date={2004},
   pages={149--156},
}

\bib{MR3449221}{article}{
   author={Larsen, Michael},
   author={Shalev, Aner},
   title={On the distribution of values of certain word maps},
   journal={Trans. Amer. Math. Soc.},
   volume={368},
   date={2016},
   number={3},
   pages={1647--1661},
}

\bib{MR3831276}{article}{
   author={Larsen, Michael},
   author={Shalev, Aner},
   title={Words, Hausdorff dimension and randomly free groups},
   journal={Math. Ann.},
   volume={371},
   date={2018},
   number={3-4},
   pages={1409--1427},
}

\bib{MR3997129}{article}{
   author={Larsen, Michael},
   author={Shalev, Aner},
   author={Tiep, Pham Huu},
   title={Probabilistic Waring problems for finite simple groups},
   journal={Ann. of Math. (2)},
   volume={190},
   date={2019},
   number={2},
   pages={561--608},
}

\bib{MR3964506}{article}{
   author={Liebeck, Martin W.},
   author={Shalev, Aner},
   title={Girth, words and diameter},
   journal={Bull. Lond. Math. Soc.},
   volume={51},
   date={2019},
   number={3},
   pages={539--546},
}

\bib{pestov}{article}{
   author={Pestov, Vladimir G.},
   title={Hyperlinear and sofic groups: a brief guide},
   journal={Bull. Symbolic Logic},
   volume={14},
   date={2008},
   number={4},
   pages={449--480},
}

\bib{MR3264763}{article}{
   author={Puder, Doron},
   author={Parzanchevski, Ori},
   title={Measure preserving words are primitive},
   journal={J. Amer. Math. Soc.},
   volume={28},
   date={2015},
   number={1},
   pages={63--97},
}

\bib{MR4292938}{article}{
   author={Schneider, Jakob},
   author={Thom, Andreas},
   title={Word images in symmetric and classical groups of Lie type are
   dense},
   journal={Pacific J. Math.},
   volume={311},
   date={2021},
   number={2},
   pages={475--504},
}

\bib{MR4694588}{article}{
   author={Schneider, Jakob},
   author={Thom, Andreas},
   title={Word maps with constants on symmetric groups},
   journal={Math. Nachr.},
   volume={297},
   date={2024},
   number={1},
   pages={165--173},
}

\end{biblist}
\end{bibdiv}

\end{document}